\documentclass{amsart}

\usepackage{centernot}
\usepackage{dsfont}
\usepackage{amsmath}
\usepackage{leftidx}
\usepackage{amsthm}
\usepackage{amsfonts}
\usepackage{mathtools}
\usepackage{color}
\usepackage[margin=40mm]{geometry}

\newcommand{\reals}{\mathbb{R}}

\newcommand{\complex}{\mathbb{C}}


\newcommand{\angles}[1]{\left\langle #1 \right\rangle}


\newcommand{\paraa}[1]{\big(#1\big)}
\newcommand{\parab}[1]{\Big(#1\Big)}


\newcommand{\spacearound}[1]{\quad#1\quad}
\newcommand{\equivalent}{\spacearound{\Leftrightarrow}}

\newcommand{\qtext}[1]{\quad\text{#1}\quad}
\newcommand{\qqtext}[1]{\qquad\text{#1}\qquad}
\newcommand{\qand}{\qtext{and}}
\newcommand{\qqand}{\qqtext{and}}

\newtheorem{theorem}{Theorem}[section]

\newtheorem{lemma}[theorem]{Lemma}
\newtheorem{proposition}[theorem]{Proposition}
\newtheorem{example}[theorem]{Example}
\theoremstyle{definition}
\newtheorem{definition}[theorem]{Definition}
\theoremstyle{remark}
\newtheorem{remark}[theorem]{Remark}
\numberwithin{equation}{section}

\renewcommand{\d}{\partial}

\renewcommand{\mid}{\mathds{1}}
\newcommand{\Xp}{X_{+}}
\newcommand{\Xm}{X_{-}}
\newcommand{\Xz}{X_{z}}
\newcommand{\omegap}{\omega_{+}}
\newcommand{\omegam}{\omega_{-}}
\newcommand{\omegaz}{\omega_{z}}
\newcommand{\Sthreeq}{S^{3}_{q}}

\newcommand{\Uqsu}{\mathcal{U}_q(\textrm{su}(2))}

\newcommand{\A}{\mathcal{A}}

\newcommand{\U}{\mathcal{U}}
\newcommand{\lt}{\triangleright}
\newcommand{\rt}{\triangleleft}
\newcommand{\one}[1]{#1_{(1)}}
\newcommand{\two}[1]{#1_{(2)}}

\newcommand{\hh}{\hat{h}}
\newcommand{\Gammat}{\tilde{\Gamma}}
\newcommand{\sigmatau}{(\sigma,\tau)}

\newcommand{\sigmah}{\hat{\sigma}}
\newcommand{\sigmahz}{\sigmah^0}
\newcommand{\tauh}{\hat{\tau}}
\newcommand{\tauhz}{\tauh^0}
\newcommand{\sigmat}{\tilde{\sigma}}
\newcommand{\taut}{\tilde{\tau}}
\newcommand{\TSigma}{T\Sigma}
\newcommand{\AstX}{\{\A,\{X_a\}_{a\in I}\}}
\newcommand{\st}{(\sigma,\tau)}

\newcommand{\eh}{\hat{e}}
\newcommand{\nablat}{\tilde{\nabla}}
\newcommand{\Kh}{\hat{K}}
\newcommand{\id}{\operatorname{id}}

\newcommand{\comR}[1]{[#1]_R}
\newcommand{\otimesC}{\otimes_{\complex}}
\newcommand{\End}{\operatorname{End}}
\newcommand{\Mat}{\operatorname{Mat}}
\newcommand{\MatNC}{\Mat_{N}(\complex)}

\newcommand{\Curv}{\operatorname{Curv}}
\newcommand{\idA}{\id_{\A}}
\newcommand{\Mp}{M_p}
\newcommand{\Xt}{\tilde{X}}
\newcommand{\Sigmas}{\Sigma^{\ast}}

\title{On the geometry of $(\sigma, \tau)$-algebras}
\author{Joakim Arnlind and Kwalombota Ilwale}

\address[Joakim Arnlind]{Dept. of Math.\\
Link\"oping University\\
581 83 Link\"oping\\
Sweden}
\email{joakim.arnlind@liu.se}

\address[Kwalombota Ilwale]{Dept. of Math.\\
Link\"oping University\\
581 83 Link\"oping\\
Sweden}
\email{kwalombota.ilwale@liu.se}

\subjclass[2010]{46L87}
\keywords{}

\begin{document}

\maketitle

\begin{abstract}
  We introduce $\st$-algebras as a framework for
  twisted differential calculi over noncommutative, as well as
  commutative, algebras with motivations from the theory of
  $\sigma$-derivations and quantum groups. A $\st$-algebra consists
  of an associative algebra together with a set of $\st$-derivations,
  and corresponding notions of $\st$-modules and connections are
  introduced.  We prove that $\st$-connections exist on projective
  modules, and introduce notions of both torsion and curvature, as
  well as compatibility with a hermitian form, leading to the
  definition of a Levi-Civita $\st$-connection. To illustrate the
  novel concepts, we consider $\st$-algebras and connections over
  matrix algebras in detail.
\end{abstract}

\section{Introduction}

\noindent
In the context of mathematics and mathematical physics, the process of
deforming well-known objects into new ones has proven to be a rich
source of new phenomena. Important examples stem from the theory of
quantum groups and deformation quantization, which turn out to be well
suited to study both quantum and noncommutative geometry. The
(noncommutative) differential geometry of such deformations has been
studied by many authors, and there is a fair amount of results related
to both differential and Riemannian structures as well as aspects of
symmetry (see
e.g. \cite{m:quantum.groups.ncg,cff:gravityncgeometry,m:quantum.braided.group,cl:isospectral,bm:starCompatibleConnections,bm:quantum.riemannian.geometry}). For
the most part, these approaches are based on finding noncommutative
analogues of differential forms in a framework which is independent of
derivations (or vector fields). However, more derivation based
approaches to noncommutative differential geometry have also been
considered in parallel (see
e.g. \cite{dv:calculDifferentiel,dbm:central.bimodules,ac:ncgravitysolutions,r:leviCivita,aw:cgb.sphere,aw:curvature.three.sphere,a:lc.braided}). In
this context one should also mention the rich literature related to
the spectral approach to noncommutative geometry pioneered by
A. Connes \cite{c:ncgbook}, even though analytic aspects are not the
main focus of our work.

From a slightly different point of view, deformations of algebraic
structures, motivated by e.g. discretizations of vector fields, has
been studied in terms of $\sigmatau$-derivations and Hom-Lie algebras
(see e.g. \cite{hls:sigmaderivation,ls:quasi-deformations}). In
contrast to many of the above approaches, one considers deformations
of derivations into $\sigmatau$-derivations. A $\sigmatau$-derivation
on an associative algebra $\A$ is a $\complex$-linear map $X:\A\to\A$ such that
there exist endomorphisms $\sigma,\tau:\A\to\A$ fulfilling
\begin{align*}
  X(fg) = \sigma(f)X(g)+X(f)\tau(g),
\end{align*}
for all $f,g\in\A$. Such twisted derivations are not only relevant to
noncommutative algebras; for instance, one may consider the Jackson
$q$-derivative acting on functions on the real line as
\begin{align*}
  (\d_qf)(t) = \frac{f(qt)-f(t)}{t(q-1)}
\end{align*}
for some $q>1$, which is a $\sigmatau$-derivation with
$\sigma(f)(t)=f(qt)$ and $\tau(f)(t)=f(t)$. This ``discrete
derivative'' is a crucial ingredient when constructing $q$-deformed
Witt algebras over the Laurent polynomials $\complex[t,t^{-1}]$
\cite{hls:sigmaderivation}. However, differential geometry based on
$\sigmatau$-derivations has not been considered to the same
extent. Motivated by the case study of the quantum 3-sphere, described in
\cite{ail:qdeformed,ail:lc.spheres}, we introduce a framework for studying modules
and connections over algebras together with a set of
$\sigmatau$-derivations. In this setting, we require that a (left)
$\A$-module $M$ comes equipped with $\complex$-linear maps $\sigmah,\tauh:M\to M$ such
that
\begin{align*}
  \sigmah(fm) = \sigma(f)\sigmah(m)\qquad
  \tauh(fm) = \tau(f)\tauh(m)
\end{align*}
for $f\in\A$ and $m\in M$, and for a $\sigmatau$-derivation $X$, a $\sigmatau$-connection
$\nabla_X:M\to M$ is required to satisfy a twisted Leibniz' rule
\begin{align*}
  \nabla_X(fm) = \sigma(f)\nabla_Xm + X(f)\tauh(m).
\end{align*}
in correspondence with $X(fg)=\sigma(f)X(g)+X(f)\tau(g)$. Furthermore,
we introduce metric compatibility and torsion of
$\sigmatau$-connections, as well as curvature, and prove that
$\sigmatau$-connections exist on projective modules, and introduce a
concept of Levi-Civita connections. Along the way, we present a number of
examples to illustrate the novel concepts. In particular,
$\st$-algebras over matrix algebras are considered in Section~\ref{sec:lc.matrix}.

\section{$(\sigma,\tau)$-algebras and $\Sigma$-modules}

\noindent
In a geometric interpretation of derivation based differential calculi
over an algebra $\A$, one chooses a Lie algebra of derivations as a
substitute for vector fields, in analogy with the fact that the set of
vector fields on a smooth manifold can be identified with the set of
derivations of the algebra of functions on the manifold. A derivation
of a (possibly noncommutative) algebra $\A$ is a $\complex$-linear map
$X:\A\to\A$ satisfying the so called Leibniz' rule
\begin{align}\label{eq:Leibniz.rule}
  X(fg) = fX(g) + X(f)g
\end{align}
for all $f,g\in\A$. In several different contexts, ``twisted
derivations'' appear in a natural way, satisfying a deformed version
of \eqref{eq:Leibniz.rule}
\begin{align}\label{eq:sigmatau.der.rule}
  X(fg) = \sigma(f)X(g) + X(f)\tau(g)
\end{align}
where $\sigma,\tau:\A\to\A$ are algebra endomorphisms. Such maps are
called \emph{$\sigmatau$-derivations}
(cf. Definition~\ref{def:sigmatau.derivation}). For instance,
different types of discretized derivatives will typically satisfy
\eqref{eq:sigmatau.der.rule} rather than the standard Leibniz'
rule. Hence, $\sigmatau$-derivations are not only relevant in the case
of noncommutative or ``quantum'' geometry, but also for commutative
algebras. Now, we are interested in the following question: Can one
construct an algebraic framework to study geometry based on
$\sigmatau$-derivations?

In this section we introduce $\st$-algebras as associative algebras
together with a set of $\sigmatau$-derivations, and define
$\Sigma$-modules as modules over $\sigmatau$-algebras, that carry an
extension of $\sigma$ and $\tau$ as module maps. Although already
discussed to some extent, let us start by properly defining
$\sigmatau$-derivations.

\begin{definition}\label{def:sigmatau.derivation}
  Let $\A$ be an associative algebra over $\complex$ and let $\sigma$
  and $\tau$ be endomorphisms of $\A$. A $\complex$-linear map
  $X : \A \to \A$ is called a $(\sigma, \tau)$-derivation of $\A$ if
  \begin{eqnarray}\label{(sigma, tau der}
    X(fg) = \sigma(f)X(g) + X(f)\tau(g)
  \end{eqnarray}
  for every $f,g \in \A.$
\end{definition}

\noindent A canonical example of a $\st$-derivation on any associative
algebra $\A$ is given by
\begin{align}\label{eq:sigma.tau.inner.derivation}
  X(f) = \tau(f)-\sigma(f),
\end{align}
for arbitrary endomorphisms $\sigma,\tau:\A\to\A$. For instance, letting $\A$ denote
the (commutative) algebra of functions on the real line, one notes
that the difference operator
\begin{align*}
  \d(f)(x) = f(x+\hbar)-f(x)
\end{align*}
for $\hbar\in\reals$, is a $(\sigma,\tau)$-derivation of the form as in
\eqref{eq:sigma.tau.inner.derivation}, with $\tau(f)(x)=f(x+\hbar)$
and $\sigma(f)(x)=f(x)$. Note that $\sigmatau$-derivations of the form
as in \eqref{eq:sigma.tau.inner.derivation} are simultaneously
$\sigmatau$- and $(\tau,\sigma)$-derivations; i.e.
\begin{align*}
  X(fg) = \sigma(f)X(g) + X(f)\tau(g) = \tau(f)X(g) + X(f)\sigma(g).
\end{align*}
In the next definition, we introduce the main object of our study; an
associative algebra together with a set of $\sigmatau$-derivations.

\begin{definition}
  A \emph{$\sigmatau$-algebra $\Sigma = (\A,\{X_a\}_{a\in I})$} is a
  pair where $\A$ is an associative unital algebra over $\complex$, $I$ is a
  set, and $X_a$ is a $(\sigma_a,\tau_a)$-derivation of $\A$ for
  $a\in I$.
\end{definition}

\noindent
As we are interested in linear combinations of elements of the set
$\{X_a\}_{a\in I}$, we introduce the vector space generated by this
set. Note, however, that the sum of two $\sigmatau$-derivations (for
\emph{different} $\sigma$'s and $\tau$'s) is not necessarily a
$\sigmatau$-derivation. Therefore, a general element of the vector
space is only a $\complex$-linear map.

\begin{definition}
  Let $\Sigma=\AstX$ be a $\sigmatau$-algebra and define
  $\TSigma\subseteq\End_{\complex}(\A)$ to be the complex vector space
  generated by $\{X_a\}_{a\in I}$. We call $\TSigma$ the \emph{tangent
    space of $\Sigma$}.
\end{definition}

\noindent 
Morphisms of $\sigmatau$-algebras are algebra morphisms preserving the
action of the $\sigmatau$-derivations as follows.

\begin{definition}
  Let $\Sigma=(\A,\{X_a\}_{a\in I})$ and
  $\Sigma'=(\A',\{Y_\alpha\}_{\alpha\in J})$ be $\st$-algebras. A
  \emph{$\st$-algebra homomorphism $(\phi,\psi):\Sigma\to\Sigma'$} is
  a pair of maps such that $\phi:\A\to\A'$ is an algebra homomorphism
  and $\psi:T\Sigma'\to T\Sigma$ is a $\complex$-linear map such that
  \begin{align}\label{eq:stmorphism.def}
    \phi\paraa{\psi(Y)(f)} = Y\paraa{\phi(f)} 
  \end{align}
  for all $Y\in\TSigma'$ and $f\in\A$. Furthermore, $(\phi,\psi)$ is
  called a \emph{$\st$-algebra isomorphism} if $\phi$ is an algebra
  isomorphism and $\psi$ is a bijection.
\end{definition}

\noindent
Note that if $\phi$ is invertible, then \eqref{eq:stmorphism.def} determines $\psi$ uniquely as
$\psi(Y) = \phi^{-1}\circ Y\circ\phi$. Thus, if $\phi$ is an algebra isomorphism then there
exists at most one $\psi$ such that $(\phi,\psi)$ is a $\st$-algebra
morphism.

Now, let us proceed to introduce modules over $\sigmatau$-algebras.

\begin{definition}
  Let $\Sigma=(\A,\{X_a\}_{a\in I})$ be a $\sigmatau$-algebra. A left
  $\Sigma$-module is a pair $(M,\{(\sigmah_a,\tauh_a)\}_{a\in I})$
  where $M$ is a left $\A$-module and $\sigmah_a,\tauh_a:M\to M$ are
  $\complex$-linear maps such that
  \begin{align*}
    &\sigmah_a(fm) = \sigma_a(f)\sigmah_a(m)\\
    &\tauh_a(fm) = \tau_a(f)\tauh_a(m)
  \end{align*}
  for $f\in\A$, $m\in M$ and $a\in I$.
\end{definition}

\noindent
We say that the $\Sigma$-module $(M,\{(\sigmah_a,\tauh_a)\}_{a\in I})$
is \emph{free} respectively \emph{projective}, if $M$ is a free
respectively projective $\A$-module.  For completeness, let us also
state the definition of a right $\Sigma$-module, as well as a
$\Sigma$-bimodule.

\begin{definition}
  Let $\Sigma=(\A,\{X_a\}_{a\in I})$ be a $\sigmatau$-algebra. A right
  $\Sigma$-module is a pair $(M,\{(\sigmah_a,\tauh_a)\}_{a\in I})$
  where $M$ is a right $\A$-module and $\sigmah_a,\tauh_a:M\to M$ are
  $\complex$-linear maps such that
  \begin{align*}
    &\sigmah_a(mf) = \sigmah_a(m)\sigma_a(f)\\
    &\tauh_a(mf) = \tauh_a(m)\tau_a(f)
  \end{align*}
  for $f\in\A$, $m\in M$ and $a\in I$.
\end{definition}

\begin{definition}
  Let $\Sigma=(\A,\{X_a\}_{a\in I})$ be a $\sigmatau$-algebra and let
  $M$ be a $\A$-bimodule. A $\Sigma$-bimodule $(M,\{\sigmah_a,\tauh_a\}_{a\in I})$ is
  a left $\Sigma$-module which is also a right $\Sigma$-module, i.e.
  \begin{align*}
    &\sigmah_a(fmg) = \sigma_a(f)\sigmah(m)\sigma_a(g)\\
    &\tauh_a(fmg) = \tau_a(f)\tauh_a(m)\tau_a(g)
  \end{align*}
  for $f,g\in\A$, $m\in M$ and $a\in I$.
  
\end{definition}

\noindent
Morphisms of $\Sigma$-modules are readily introduced as module
morphisms intertwining the corresponding $\sigmah_a$ and $\tauh_a$.
\begin{definition}
  Let $\Sigma=(\A,\{X_a\}_{a\in I})$ be a $\sigmatau$-algebra and let
  $(M_1,\{(\sigmah_a,\tauh_a)\}_{a\in I})$ and
  $(M_2,\{(\sigmat_a,\taut_a)\}_{a\in I})$ be left (right)
  $\Sigma$-modules. A left (right) \emph{$\Sigma$-module homomorphism}
  is a left (right) $\A$-module homomorphism $\phi:M_1\to M_2$ such
  that
  \begin{align*}
    \phi\paraa{\sigmah_a(m)} = \sigmat_a\paraa{\phi(m)}\qand
    \phi\paraa{\tauh_a(m)} = \taut_a\paraa{\phi(m)}
  \end{align*}
  for $m\in M_1$ and $a\in I$. Moreover, if $\phi$ is a left (right) module
  isomorphism we say that $\phi$ is a left (right) $\Sigma$-module isomorphism.
\end{definition}

\noindent
Let us now illustrate the above concepts with a simple example,
showing that every free module can be given the structure of a
$\Sigma$-module.

\begin{example}\label{ex:free.module.sigma.structure}
  Let $\Sigma=(\A,\{X_a\}_{a\in I})$ be a $\sigmatau$-algebra and let
  $\A^{n}$ be a free left $\A$-module with basis
  $e_{1},\ldots, e_{n}$. Defining
  \begin{eqnarray*}
    \hat{\sigma}^0_a(m) = \sigma_a(m^i)e_i\qand
    \hat{\tau}^0_a(m) = \tau_a(m^i)e_i
  \end{eqnarray*}
  for $m=m^ie_i\in \A^n$, it follows immediately that
  $(\A^n,\{(\hat{\sigma}^0_a,\hat{\tau}^0_a)\}_{a \in I})$ is indeed a
  left $\Sigma$-module. The construction can be slightly generalized
  by noting that one may also set
  \begin{align*}
    \sigmah_a(m) = \sigma_a(m^i)\sigmah(e_i)\qand
    \tauh_a(m) = \tau_a(m^i)\tauh(e_i)  
  \end{align*}
  for an arbitrary choice of $\sigmah(e_i),\tauh(e_i)\in\A^n$. Note that here,
  and in the following, we use the convention that repeated
  indices are summed over unless otherwise stated.
\end{example}

\noindent
The following proposition states that if $M$ has a $\Sigma$-module
structure then for any module endomorphism $T : M \to M,$ the
$\A$-module $T(M)$ can be endowed with a compatible $\Sigma$-module
structure.

\begin{proposition}\label{prop:TM}
  Let $(M,\{(\hat{\sigma}_{a}, \hat{\tau}_{a})\}_{a \in I})$ be a left (right)
  $\Sigma$-module and let $T : M \to M$ be a left (right) $\A$-module
  endomorphism. Then
  $(T(M),\{(\tilde{\sigma}_{a}, \tilde{\tau}_{a})\}_{a \in I})$ is a a
  left (right) $\Sigma$-module where
  $\tilde{\sigma}_{a} = T \circ \hat{\sigma}_{a}|_{T(M)}$ and
  $\tilde{\tau}_{a} = T \circ \hat{\tau}_{a}|_{T(M)}.$
\end{proposition}

\begin{proof}
  Given that $(M, \{(\hat{\sigma}_{a}, \hat{\tau}_{a})\}_{a \in I})$
  is a $\Sigma$-module, let $m \in T(M)$ and $f \in \A,$ one has
  \begin{align*}
    \tilde{\sigma}_{a}(fm)
    &= (T \circ \hat{\sigma}_{a})(f m)
      = T (\sigma_{a}(f)\hat{\sigma}_{a}(m))
      = \sigma_{a}(f)T(\hat{\sigma}_{a}(m))
      = \sigma_{a}(f)\tilde{\sigma}_{a}(m)\\
    \tilde{\tau}_{a}(fm)
    &= T \circ \hat{\tau}_{a}(fm)
      = T(\tau_{a}(f)\hat{\tau}_{a}(m))
      = \tau_{a}(f)T(\hat{\tau}_{a}(m))
      =\tau_{a}(f)\tilde{\tau}_{a}(m).   
  \end{align*}
  showing that
  $(T(M), \{(\tilde{\sigma}_{a}, \tilde{\tau}_{a})\}_{a \in I})$ is a
  left $\Sigma$-module. The proof for right $\Sigma$-modules is analogous.
\end{proof}

\noindent
Applying Proposition~\ref{prop:TM} to the free $\Sigma$-module
$(\A^n,\{\sigmahz_a,\tauhz_a\}_{a\in I})$ of
Example~\ref{ex:free.module.sigma.structure}, and choosing $T=p$ to be
a projection, one obtains a projective $\Sigma$-module
\begin{align*}
  (p(\A^n),\{p\circ\sigmahz_a,p\circ\tauhz_a\}_{a\in I}).
\end{align*}
Conversely, every projective $\Sigma$-module can be realized in this
way, as the following result shows.

\begin{proposition}\label{proj mod iso}
  Let $(M, \{(\tilde{\sigma}_{a}, \tilde{\tau}_{a})\}_{a \in I})$ be a
  $\Sigma$-module such that $M$ is a finitely generated projective
  $\A$-module. Then there exists a free $\Sigma$-module
  $(\A^{n}, \{(\hat{\sigma}_{a}, \hat{\tau}_{a})\}_{a \in I})$ and a
  projection $p : \A^{n} \to \A^{n}$ such that
  \begin{align*}
    (p(\A^{n}), \{(p \circ \hat{\sigma}_{a}, p \circ \hat{\tau}_{a})\}_{a \in I})
    \simeq (M, \{(\tilde{\sigma}_{a}, \tilde{\tau}_{a})\}_{a \in I})
  \end{align*}
  and $[\hat{\sigma}_{a}, p] = [\hat{\tau}_{a}, p] = 0$ for $a \in I.$
\end{proposition}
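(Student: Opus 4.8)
The plan is to use the standard characterization of finitely generated projective modules. Since $M$ is finitely generated projective, there exists $n$ and a projection $p:\A^n\to\A^n$ together with a left $\A$-module isomorphism $\phi:p(\A^n)\to M$. I would start from this $\phi$ and the free module $\A^n$ with basis $e_1,\dots,e_n$, and then try to \emph{define} the free $\Sigma$-module structure $\{(\sigmah_a,\tauh_a)\}$ in a way that simultaneously (i) commutes with $p$, and (ii) makes $\phi$ a $\Sigma$-module isomorphism after composing with $p$.

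\medskip

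The key idea is that the desired $\sigmah_a$ on $\A^n$ is essentially forced on the image $p(\A^n)$ by the requirement that $\phi$ be a $\Sigma$-morphism, and is then extended to the complement. Concretely, on the submodule $p(\A^n)$ I would pull back the given $\tilde\sigma_a$ via $\phi$, defining $\sigmah_a$ there by $\sigmah_a(x) = \phi^{-1}\bigl(\tilde\sigma_a(\phi(x))\bigr)$ for $x\in p(\A^n)$, and set $\sigmah_a$ to vanish (or act freely) on the complementary submodule $(1-p)(\A^n) = \ker p$. Since $\A^n = p(\A^n)\oplus (1-p)(\A^n)$ as left $\A$-modules, this gives a well-defined $\complex$-linear map on all of $\A^n$; the same recipe defines $\tauh_a$. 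One must then check three things: that $\sigmah_a,\tauh_a$ satisfy the module-twisting relations $\sigmah_a(fm)=\sigma_a(f)\sigmah_a(m)$ making $(\A^n,\{(\sigmah_a,\tauh_a)\})$ a genuine free $\Sigma$-module; that $[\sigmah_a,p]=[\tauh_a,p]=0$; and that the resulting isomorphism of Proposition~\ref{prop:TM} type holds. The commutation $[\sigmah_a,p]=0$ should follow almost by construction, since $\sigmah_a$ maps $p(\A^n)$ into $p(\A^n)$ (because $\phi$ has image $M$ and $\phi^{-1}$ lands in $p(\A^n)$) and maps $\ker p$ into itself; on each summand $p$ and $\sigmah_a$ then commute, hence they commute on the direct sum.

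\medskip

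Finally I would verify that $\phi$ is a $\Sigma$-module isomorphism between $(p(\A^n),\{(p\circ\sigmah_a,p\circ\tauh_a)\})$ and $(M,\{(\tilde\sigma_a,\tilde\tau_a)\})$. By construction $\phi$ is already a left $\A$-module isomorphism onto $M$, so it remains to confirm the intertwining relations $\phi\bigl((p\circ\sigmah_a)(x)\bigr) = \tilde\sigma_a(\phi(x))$ for $x\in p(\A^n)$. Here one uses that $\sigmah_a(x)\in p(\A^n)$ so that $p\circ\sigmah_a$ agrees with $\sigmah_a$ on $p(\A^n)$, whence $\phi\bigl(\sigmah_a(x)\bigr)=\phi\bigl(\phi^{-1}(\tilde\sigma_a(\phi(x)))\bigr)=\tilde\sigma_a(\phi(x))$, and similarly for $\tauh_a$.

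\medskip

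The main obstacle I anticipate is verifying the twisting relation $\sigmah_a(fm)=\sigma_a(f)\sigmah_a(m)$ for the \emph{extended} map on all of $\A^n$, since the decomposition $m = p(m) + (1-p)(m)$ must interact correctly with left multiplication by $f$. This works precisely because $p$ is a left $\A$-module map, so $p(fm)=f\,p(m)$ and the two summands are each $\A$-submodules; one then checks the relation separately on $p(\A^n)$ (where it follows from the relation for $\tilde\sigma_a$ on $M$ and $\A$-linearity of $\phi,\phi^{-1}$) and on $\ker p$ (where the chosen extension must itself be arranged to be twisting, e.g.\ by choosing it of the form in Example~\ref{ex:free.module.sigma.structure} relative to a basis-type splitting, or simply taking it to be $\sigma_a$-twisting by construction). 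Assembling these pieces should complete the proof.
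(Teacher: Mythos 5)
Your proposal is correct and follows essentially the same route as the paper: the paper's maps $\hat{\sigma}_a = \psi\circ\tilde{\sigma}_a\circ\phi$ (with $\psi$ the splitting of the surjection $\phi:\A^n\to M$ and $p=\psi\circ\phi$) are exactly your pullback of $\tilde{\sigma}_a$ on $p(\A^n)$ extended by zero on $\ker p$, and the verifications you outline (twisting relation, $[\hat{\sigma}_a,p]=0$, intertwining of $\phi$) are the ones carried out there. The only cosmetic difference is that the paper works with the single composite formula rather than the direct-sum decomposition, which makes the commutation with $p$ a one-line computation.
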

\begin{proof}
  Assume that $M$ is finitely generated with generators
  $e_{1}, \ldots, e_{n}$, and let $\eh_1,\ldots,\eh_n$ be a basis of
  $\A^n$. Defining $\phi : \A^{n} \to M$ by
  $\phi(m^{i}\hat{e}_{i}) = m^{i}e_{i}$ one notes that $\phi$ is
  surjective.  Since $M$ is a projective module, there exists a
  homomorphism $ \psi : M \to \A^{n}$ such that
  $\phi \circ \psi = \id_{M}.$ Defining $p : \A^{n} \to \A^{n}$
  as $p = \psi \circ \phi$ one finds that
  \begin{align*}
    p^{2} = \psi \circ \phi \circ \psi \circ \phi = \psi \circ \phi = p,
  \end{align*}
  since $\phi \circ \psi = \id_{M}$.  Now, let
  $\hat{\phi} = \phi|_{p(\A^{n})} : p(\A^{n}) \to M$ be the
  restriction of $\phi$ to $p(\A^{n})$. Since
  \begin{align*}
    p(\psi(m)) = \psi \circ \phi \circ \psi(m) = \psi(m),
  \end{align*}
  one concludes that $\psi(m) \in p(\A^{n})$, showing that
  $\hat{\phi}$ is surjective. Moreover, $\hat{\phi}$ is injective
  since if $m\in p(\A^n)$ and $\phi(m)=0$, then
  \begin{align*}
    m = p(m) = \psi\paraa{\phi(m)} = 0,
  \end{align*}
  and we conclude that $\hat{\phi}:p(\A^n)\to M$ is a module isomorphism.
  
  Defining $\sigmah_a,\tauh_a:\A^n\to\A^n$ as
  \begin{align*}
    \hat{\sigma}_{a} = \psi \circ \tilde{\sigma}_{a} \circ \phi\qand
    \hat{\tau}_{a} = \psi \circ \tilde{\tau}_{a} \circ \phi,
  \end{align*}
  it follows that
  \begin{align*}
    \hat{\sigma}_{a}(fm) &= \psi\paraa{\tilde{\sigma}_{a} (\phi(fm))}
      = \psi\paraa{ \tilde{\sigma}_{a}(f\phi(m))}
      = \psi\paraa{\sigma_{a}(f)\tilde{\sigma}_{a}\phi(m))}\\
    &= \sigma_{a}(f) \psi \paraa{ \tilde{\sigma}_{a} (\phi(m))}
      = \sigma_{a}(f)\hat{\sigma}_{a}(m),
  \end{align*}
  as well as
  \begin{align*}
    \hat{\tau}_{a}(fm)
    &= \psi\paraa{\tilde{\tau}_{a}(\phi(fm))}
      =\psi\paraa{\tilde{\tau}_{a}(f\phi(m))}
      = \psi\paraa{\tau_{a}(f)\tilde{\tau}_{a}(\phi(m))}\\
    &= \tau_{a}(f)\psi\paraa{\tilde{\tau}_{a}(\phi(m))}
      = \tau_{a}(f)\hat{\tau}_{a}(m),
 \end{align*}
 showing that
 $(\A^{n}, \{(\hat{\sigma}_{a}, \hat{\tau}_{a})\}_{a \in I})$ is a
 $\Sigma$-module, and it follows from Proposition~\ref{prop:TM} that
 $(p(\A^{n}), \{(p \circ \hat{\sigma}_{a}, p \circ
 \hat{\tau}_{a})\}_{a\in I})$ is a $\Sigma$-module as well. Moreover, since
 \begin{align*}
   &\hat{\phi} \circ \hat{\sigma}_{a}
   = \hat{\phi} \circ \psi \circ \tilde{\sigma}_{a} \circ \hat{\phi}
     = \tilde{\sigma}_{a} \circ \hat{\phi}\\
   &\hat{\phi} \circ \hat{\tau}_{a}
     = \hat{\phi} \circ \psi \circ \tilde{\tau}_{a} \circ \hat{\phi}
     = \tilde{\tau}_{a} \circ \hat{\phi},
 \end{align*}
 one concludes that
 \begin{align*}
   \hat{\phi}:(p(\A^{n}), \{(p \circ \hat{\sigma}_{a}, p \circ \hat{\tau}_{a})\}_{a\in I})\to
   (M, \{(\tilde{\sigma}_{a}, \tilde{\tau}_{a})\}_{a \in I})   
 \end{align*}
 is a $\Sigma$-module isomorphism. Next, one computes
 \begin{align*}
   &\hat{\sigma}_{a} \circ p
     = \psi \circ \tilde{\sigma}_{a} \circ \phi \circ \psi \circ \phi
     = \psi \circ \tilde{\sigma}_{a} \circ \phi\\
   &p \circ \hat{\sigma}_{a}
     = \psi \circ \phi \circ \psi \circ \tilde{\sigma}_{a} \circ \phi
     = \psi \circ \tilde{\sigma}_{a} \circ \phi
 \end{align*}
 giving $[\hat{\sigma}_{a}, p] = 0.$ In an analogous way, one shows
 that $[\hat{\tau}_{a}, p] = 0.$
\end{proof}

\subsection{$\sigmatau$-$\ast$-algebras and $\Sigma$-$\ast$-bimodules}

\noindent
If $(\A,\{X_a\}_{a\in I})$ is a $\sigmatau$-algebra where $\A$ is a
$\ast$-algebra, we proceed to define $\sigmatau$-$\ast$-algebras,
where one requires certain compatibility conditions with this extra
structure. The introduction of $\ast$-structures on
$\sigmatau$-algebras will become important in the following as we aim
to describe metric structures on $\Sigma$-modules via hermitian forms.

Given a $\ast$-algebra $\A$ and an arbitrary map $\alpha:\A\to\A$, one
defines $\alpha^\ast:\A\to\A$ as $\alpha^\ast(f)=\alpha(f^\ast)^\ast$
for $f\in\A$.  Consequently, given a $\sigmatau$-derivation $X$ one
finds that
\begin{align*}
  X^{\ast}(fg) = \tau^{\ast}(f)X^{\ast}(g) + X^{\ast}(f)\sigma^{\ast}(g)
\end{align*}
for $f, g \in \A$, implying that $X^{\ast}$ is a
$(\tau^{\ast}, \sigma^{\ast})$-derivation. Given $\sigmatau$-algebra
$(\A,\{X_a\}_{a\in I})$, such that $\A$ is a $\ast$-algebra, it is
natural to require that the set $\{X_a\}_{a\in I}$ contains $X_a^\ast$
for all $a\in I$; this leads to the following definition.

\begin{definition}
  Let $\A$ be a $\ast$-algebra. A $\sigmatau$-$\ast$-algebra
  $(\A, \{X_{a}\}_{a \in I}, \iota)$ is a $\sigmatau$-algebra
  $(\A,\{X_a\}_{a\in I})$ together with an involution $\iota: I \to I$
  such that
  \begin{align*}
    X_{a}^{\ast} = X_{\iota(a)}\qand
    \sigma_{\iota(a)} = \tau_{a}^{\ast} 
  \end{align*}
  for all $a \in I$.
\end{definition}

\noindent
The notion of morphism is readily extended to $\sigmatau$-$\ast$-algebras.

\begin{definition}
  A morphism of $\st$-$\ast$-algebras
  \begin{align*}
   (\phi,\psi): \Sigma=(\A,\{X_a\}_{a\in
    I},\iota)\to\Sigma'=(\A',\{Y_\alpha\}_{\alpha\in J},\iota') 
  \end{align*}
  is a morphism of $\st$-algebras
  $(\phi,\psi):(\A,\{X_a\}_{a\in I})\to(\A',\{Y_\alpha\}_{\alpha\in
    J})$ such that $\phi$ is a $\ast$-algebra homomorphism and
  $\psi(X^\ast)=\psi(X)^\ast$ for all $X\in\TSigma$.
\end{definition}

\noindent
Let us now recall the notion of $\ast$-bimodules.

\begin{definition}
  Let $\A$ be a $\ast$-algebra. A $\ast$-bimodule over $\A$ is a
  $\A$-bimodule $M$ together with an involution $\ast : M \to M$ such
  that
  \begin{eqnarray*}
    &(m_{1} + m_{2})^{\ast} = m_{1}^{\ast} + m_{2}^{\ast}& \quad (\lambda m)^{\ast} = \bar{\lambda}m^{\ast}\\
    &(m^{\ast})^{\ast} = m &\quad
                             (fmg)^{\ast} = g^{\ast}m^{\ast}f^{\ast}
  \end{eqnarray*}
  for $m, m_{1}, m_{2} \in M, f, g \in \A$ and $\lambda \in \mathbb{C}.$
\end{definition}

\noindent
For a $\ast$-bimodule $M$ over $\A$ and an arbitrary map $T:M\to M$,
one sets
\begin{align*}
  T^\ast(m) = \paraa{T(m^\ast)}^\ast.
\end{align*}
Note that, given a $\sigmatau$-$\ast$-algebra
$\Sigma=(\A,\{X_a\}_{a\in I},\iota)$ and a $\Sigma$-bimodule
$(M, \{(\sigmah_{a}, \tauh_{a})\}_{a \in I})$ such that $M$ is a
$\ast$-bimodule, it follows that
\begin{align*}
  &\sigmah_{a}^{\ast}(fm) = \sigma_{a}^{\ast}(f)\sigmah_{a}^{\ast}(m)\\
  &\tauh_{a}^{\ast}(fm) = \tau_{a}^{\ast}(f)\tauh_{a}^{\ast}(m).
\end{align*}
In correspondence with $\sigmatau$-$\ast$-algebras, let us introduce
$\Sigma$-$\ast$-modules.

\begin{definition}
  Let $\Sigma$ be a $(\sigma, \tau)$-$\ast$-algebra. A
  $\Sigma$-$\ast$-module is a $\Sigma$-module
  $(M, \{(\sigmah_{a}, \tauh_{a})\}_{a \in I})$ such that $M$ is a
  $\ast$-bimodule and $\sigmah_{\iota(a)}=\tauh_a^\ast$
  for all $a \in I$.
\end{definition}

\begin{example}
  Let $\A$ be a $\ast$-algebra and $\A^{n}$ be a free left $\A$-module
  with basis $\{e_i\}_{i=1}^n$ together with the bimodule structure
  $(m^ie_i)f = (m^if)e_i$ for $f\in\A$ and $m^ie_i\in \A^n$.  Setting
  $m^{\ast} = m^{i \ast}e_{i}$ for $m = m^{i}e_{i},$ one has
  \begin{eqnarray*}
    &&(m_{1} + m
       _{2})^{\ast} = (m_{1}^{i} + m_{2}^{ i})^{\ast}e_{i} = (m_{1}^{i \ast} + m_{2}^{i \ast})e_{i} = m_{1}^{\ast} + m_{2}^{\ast}\\
    &&(m^{\ast})^{\ast} = (m^{i \ast})^{\ast}e_{i} = m\\
    &&(fmg)^{\ast} = (fm^{i}g)^{\ast}e_{i} = (g^{\ast} m^{i \ast} f^{\ast})e_{i} = g^{\ast} m^{\ast} f^{\ast},
  \end{eqnarray*}
  showing that $\A^{n}$ is a $\ast$-bimodule. Given a
  $(\sigma, \tau)$-$\ast$-algebra
  $\Sigma = (\A, \{X_{a}\}_{a \in I},\iota)$ and the $\Sigma$-bimodule
  $(\A^{n}, \{(\sigmah_{a}^{0}, \tauh_{a}^{0})\}_{a \in I})$, one
  finds that
  \begin{eqnarray*}
    &&(\sigmah_{a}^{0})^{\ast}(m) = \sigmah_{a}^{0}(m^{\ast})^{\ast} = (\sigmah_{a}^{0}(m^{i \ast}e_{i}))^{\ast} = (\sigma_{a}(m^{i \ast}))^{\ast}e_{i} = \sigma_{a}^{\ast}(m^{i})e_{i} \\
    &&= \tau_{\iota(a)}(m^{i})e_{i} = \tauh_{\iota(a)}^{0}(m).
  \end{eqnarray*}
  showing that $(\sigmah_{a}^{0})^{ \ast} = \tauh_{\iota(a)}^{0}$. Hence,
  $(\A^{n}, \{(\sigmah_{a}^{0}, \tauh_{a}^{0})\}_{a \in I})$ is a
  $\Sigma$-$\ast$-bimodule.
\end{example}

\noindent
As an illustration of the above concepts, let us construct a
$\st$-$\ast$-algebra, over the quantum 3-sphere.  The quantum 3-sphere
$S^{3}_{q}$ is a $\ast$-algebra generated by elements $a$ and $c$
satisfying
\begin{align*}
  &ac = q ca, \quad c^{\ast} a^{\ast} = q a^{\ast}c^{\ast} \\
  &ac^{\ast} = q c^{\ast} a, \quad ca^{\ast} = qa^{\ast}c\\
  &cc^{\ast} = c^{\ast}c, \quad a^{\ast}a + c^{\ast}c = aa^{\ast} + q^{2}cc^{\ast} = \mid.
\end{align*}
Moreover, the quantum universal enveloping
algebra $\U_{q}(su(2))$ is a Hopf-$\ast$-algebra generated by
$E, F, K, K^{-1}$ satisfying the following conditions,
\begin{align*}
  &K^{\pm 1}E = q^{\pm 1}EK^{\pm 1}, \quad K^{\pm 1} F = q^{\mp 1}FK^{\pm 1}\\
  & [E, F] = \dfrac{ K - K^{-1}}{q - q^{-2}}
\end{align*}
from which one defines
\begin{align*}
  \Xp=  \sqrt{q}EK\qquad\Xm = \frac{1}{\sqrt{q}}FK\qquad \Xz = \dfrac{1 - K^{4}}{1 - q^{-2}}.
\end{align*}
The comultiplication, antipode and counit are given by
\begin{align*}
  &\Delta(K^{\pm 1}) = K^{\pm 1} \otimes K^{\pm 1}, \quad \Delta(E) = E \otimes K + K^{-1} \otimes E\\
  &\Delta (F) = F \otimes K^{-1} + K \otimes F,\\
  &S(K) = K^{-1}, \quad S(E) = -qE, \quad S(F) = -q^{-1}F,\\
  &\epsilon(K) = 1, \quad \epsilon(E) = 0,\quad \epsilon(F) = 0,
\end{align*}
and we recall that there is a unique bilinear pairing between $\Uqsu$ and
$\Sthreeq$ given by
\begin{alignat*}{2}
&\angles{K^{\pm 1}, a} = q^{\mp\, 1/2} &\qquad 
&\angles{K^{\pm 1}, a^*} = q^{\mp\, 1/2}\\
&\angles{E,c} = 1 &  &\angles{F,c^*} = -q^{-1}, 
\end{alignat*}
with the remaining pairings being zero. The pairing
induces a $\Uqsu$-bimodule structure on $\Sthreeq$ given by
\begin{align}\label{actions}
  h\lt f = \one{f}\angles{h,\two{f}}\qand
  f\rt h = \angles{h,\one{f}}\two{f}
\end{align}
for $h\in\Uqsu$ and $f\in\Sthreeq$ using Sweedler's notation
$\Delta(f)=\sum\one{f}\otimes\two{f}$. As it will be relevant in the
following, let us list the (left) action of $K$ on $S^3_q$:
\begin{equation}\label{eq:K.left.action}
  \begin{split}
    &K\lt a = q^{-\frac{1}{2}} a\qquad
    K\lt a^\ast = q^{\frac{1}{2}}a^\ast\\
    &K\lt c = q^{-\frac{1}{2}}c\qquad
    K\lt c^\ast = q^{\frac{1}{2}}c^\ast,    
  \end{split}
\end{equation}
and the structure of $\U_q(su(2))$ implies that $K$ is an algebra
endomorphism. In the following, we shall simply denote the left action
of $K$ by $K(f)$ for $f\in S^3_q$.

Now, let us proceed to construct a $\sigmatau$-$\ast$-algebra based on
$S^3_q$. It follows from the action of $\U_q(su(2))$ that
\begin{align*}
  &\Xp(fg) = f\Xp(g) + \Xp(f)K^{2}(g)\\
  &\Xm(fg) = f\Xm(g) + \Xm(f)K^{2}(g)\\
  &\Xz(fg) = f\Xz(g) + \Xz K^{4}(g)
\end{align*}
as well as
\begin{align*}
  &\Xp^\ast(f) \equiv \paraa{\Xp(f^\ast)}^\ast = -K^{-2}\paraa{\Xm(f)}\\
  &\Xm^\ast(f) \equiv \paraa{\Xm(f^\ast)}^\ast = -K^{-2}\paraa{\Xp(f)}\\
  &\Xz^\ast(f) \equiv \paraa{\Xz(f^\ast)}^\ast = -K^{-4}\paraa{\Xz(f)}.
\end{align*}
Hence, defining
\begin{align*}
  Y_1 &= iK^{-1}\Xp + iK^{-1}\Xm\\
  Y_2 &= K^{-1}\Xm - K^{-1}\Xp\\
  Y_3 &= iK^{-2}\Xz
\end{align*}
one finds that $Y_a^\ast=Y_a$ for $a=1,2,3$, and
\begin{align*}
  &Y_1(fg) = K^{-1}(f)Y_1(g) + Y_1(f)K(g)\\
  &Y_2(fg) = K^{-1}(f)Y_2(g) + Y_2(f)K(g)\\
  &Y_3(fg) = K^{-2}(f)Y_3(g) + Y_3(f)K^2(g)
\end{align*}
implying that $Y_1,Y_2,Y_3$ are $\sigmatau$-derivations with
\begin{align*}
  \sigma_1=K^{-1}\quad \sigma_2=K^{-1}\quad \sigma_3 = K^{-2}\qquad
  \tau_1=K\quad \tau_2=K\quad \tau_3=K^2.
\end{align*}
Hence, $\Sigma=(S^3_q,\{Y_1,Y_2,Y_3\})$ is a $\st$-algebra. Next, we note that
\begin{align*}
  \sigma_1^\ast(f) = \paraa{K^{-1}(f^\ast)}^\ast = K(f) = \tau_1(f),
\end{align*}
and an analogous computation for $\sigma_2,\sigma_3$ shows that
$\sigma_a^\ast = \tau_a$ for $a=1,2,3$. Finally, defining
$\iota:\{1,2,3\}\to\{1,2,3\}$ as $\iota(a)=a$, it follows that
\begin{align*}
  \Sigma^\ast = (\Sthreeq,\{Y_1,Y_2,Y_3\},\iota)
\end{align*}
is a $\st$-$\ast$-algebra since
$\sigma_{\iota(a)} = \sigma_a = \tau_a^\ast$ for $a=1,2,3$.  Next, let
us construct a $\Sigma$-$\ast$-bimodule over $\Sthreeq$.

As is well known, the standard module $\Omega^{1}(S^{3}_{q})$ of
differential forms over $\Sthreeq$ is freely generated by
$\omega_{+}, \omega_{-}, \omega_{z}$ defined as
\begin{align*}
  \omega_{+} = \omega_{1} = adc -q c da, \quad
  \omega_{-} = \omega_{2} = c^{\ast} da  - q a^{\ast}d c^{\ast} \quad
  \omega_{z} = \omega_{3} = a^{\ast}da + c^{\ast} dc
\end{align*}
where the differential $d : S^{3}_{q} \to \Omega^{1}S^{3}_{q}$ is given by 
\begin{eqnarray*}
  df = X_{+}(f)\omega_{+} + X_{-}(f)\omega_{-} + X_{z}(f)\omega_{z}
\end{eqnarray*}
for $f \in S^{3}_{q}$. Moreover $\Omega^{1}(S^{3}_{q})$ is a
$\ast$-bimodule with
$\omega_{+}^{\ast} = -\omega_{-}, \omega_{z}^{\ast} = -\omega_{z}$
together with the following bimodule relations
\begin{equation}\label{bimodule str}
  \begin{split}
    &\omega_{z}a = q^{-2}a\omega_{z} \quad \omega_{z} a^{\ast} = q^{2}a^{\ast}\omega_{z}
    \quad \omega_{\pm} a = q^{-1}a \omega_{\pm}, \quad \omega_{\pm} a^{\ast} = q a^{\ast}\omega_{\pm}\\
    &\omega_{z}c = q^{-2}c\omega_{z} \quad \omega_{z}c^{\ast} = q^{2}c^{\ast}\omega_{z}
    \quad \omega_{\pm} c = q^{-1} c \omega_{\pm} \quad \omega_{\pm} c^{\ast} = q c^{\ast}\omega_{\pm}.
  \end{split}
\end{equation}
The bimodule relations can be conveniently written in the following way.
\begin{lemma}\label{lemma:bimodule.K.commutation}
  Let $n_1=n_2=2$ and $n_3=4$. Then
  \begin{align}\label{eq:omega.f.K}
    \omega_a f = K^{n_a}(f)\omega_a
  \end{align}
  for $f\in S^3_q$ and $a=1,2,3$.
\end{lemma}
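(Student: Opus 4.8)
The plan is to verify \eqref{eq:omega.f.K} on the algebra generators of $\Sthreeq$ and then propagate it to the whole algebra by multiplicativity. The crucial structural fact is that $K$ is an algebra endomorphism of $\Sthreeq$ (as noted after \eqref{eq:K.left.action}); hence every power $K^{n_a}$ is again an algebra endomorphism, so that $K^{n_a}(1)=1$ and $K^{n_a}(fg)=K^{n_a}(f)K^{n_a}(g)$ for all $f,g\in\Sthreeq$.

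First I would check \eqref{eq:omega.f.K} directly on the generators $a,a^\ast,c,c^\ast$. Iterating the action \eqref{eq:K.left.action} yields $K^2(a)=q^{-1}a$ and $K^4(a)=q^{-2}a$, and similarly $K^2(a^\ast)=q\,a^\ast$, $K^4(a^\ast)=q^2a^\ast$, with the analogous expressions for $c$ and $c^\ast$. Comparing these coefficients with the bimodule relations \eqref{bimodule str} shows that $\omega_a f = K^{n_a}(f)\omega_a$ holds for each generator $f\in\{a,a^\ast,c,c^\ast\}$ and each $a=1,2,3$ (with $n_1=n_2=2$ and $n_3=4$).

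Next I would fix $a$ and consider the set $S$ of all $f\in\Sthreeq$ satisfying \eqref{eq:omega.f.K}. This set is closed under linear combinations, since $K^{n_a}$ is $\complex$-linear and left/right multiplication by $\omega_a$ are additive, and it contains the unit because $K^{n_a}(1)=1$. For closure under products, suppose $f,g\in S$; then
\begin{align*}
  \omega_a(fg) = (\omega_a f)g = K^{n_a}(f)\,\omega_a g = K^{n_a}(f)K^{n_a}(g)\,\omega_a = K^{n_a}(fg)\,\omega_a,
\end{align*}
where the final equality uses that $K^{n_a}$ is multiplicative. Thus $S$ is a unital subalgebra of $\Sthreeq$. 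Since $\Sthreeq$ is generated as an algebra by $a,a^\ast,c,c^\ast$, all of which lie in $S$, we conclude $S=\Sthreeq$, which establishes \eqref{eq:omega.f.K} for every $f\in\Sthreeq$.

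The only delicate point is the stability of \eqref{eq:omega.f.K} under multiplication, and this hinges precisely on $K^{n_a}$ being an algebra homomorphism rather than merely a linear map; it is exactly this property that allows a single exponent $n_a$ to govern the commutation for arbitrary products of generators, so that the generator-level verification suffices.
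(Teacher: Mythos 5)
Your proposal is correct and follows essentially the same route as the paper: verify \eqref{eq:omega.f.K} on the generators $a,a^\ast,c,c^\ast$ using \eqref{eq:K.left.action} and \eqref{bimodule str}, then propagate to products via the multiplicativity of $K^{n_a}$ and to all of $\Sthreeq$ by linearity. Your formulation in terms of the set $S$ being a unital subalgebra is just a slightly more formal packaging of the paper's argument via monomials spanning $\Sthreeq$.
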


\begin{proof}
  By using \eqref{eq:K.left.action} and \eqref{bimodule str}, it is easy to check that
  \begin{align*}
    \omega_a f = K^{n_a}(f)\omega_a
  \end{align*}
  for $f\in\{a,a^\ast,c,c^\ast\}$ and $a=1,2,3$. Next, one notes that
  if \eqref{eq:omega.f.K} holds for $f_1,f_2\in S^3_q$ then
  \begin{align*}
    \omega_a(f_1f_2)
    &= (\omega_af_1)f_2 = \paraa{K^{n_a}(f_1)\omega_a}f_2
    =K^{n_a}(f_1)(\omega_af_2) = K^{n_a}(f_1)K^{n_a}(f_2)\omega_a\\
    &= K^{n_a}(f_1f_2)\omega_a,
  \end{align*}
  from which it immediately follows that \eqref{eq:omega.f.K} holds
  for all monomials
  $a^{k_1}(a^\ast)^{k_2}c^{k_3}(c^\ast)^{k_4}$. Since such monomials
  generate $S^3_q$ as a vector space, \eqref{eq:omega.f.K} holds for all $f\in S^3_q$.
\end{proof}

\noindent
For our purposes, a slightly more convenient basis of $\Omega^1(S^3_q)$ is given by
\begin{align*}
  \eta_1 = i(\omegap + \omegam)\qquad
  \eta_2 = \omegam - \omegap \qquad
  \eta_3 = i\omegaz
\end{align*}
satisfying $\eta_a^\ast = \eta_a$ for $a=1,2,3$. It is clear that
Lemma~\ref{lemma:bimodule.K.commutation} implies that
\begin{align}
  \eta_af = K^{n_a}(f)\eta_a
\end{align}
for $f\in S^3_q$ and $a=1,2,3$.  Furthermore, we extend the action of
$K$ to $\Omega^1(S^3_q)$ by setting
\begin{align*}
  \Kh(m^a\eta_a) = K(m^a)\eta_a,
\end{align*}
and one finds that for $m=m^a\eta_a\in\Omega^1(S^3_q)$ and $f,g\in S^3_q$
\begin{align*}
  \Kh(fmg)
  &= \Kh\paraa{f(m^a\eta_a)g} = \Kh\paraa{fm^aK^{n_a}(g)\eta_a}
    = K(fm^a)K^{n_a}\paraa{K(g)}\eta_a\\
  &= K(fm^a)\eta_aK(g) = K(f)\Kh(m)K(g). 
\end{align*}
Moreover,
\begin{align*}
  \Kh^\ast(m^a\eta_a)
  &= \paraa{\Kh((m^a\eta_a)^\ast)}^\ast
    =\paraa{\Kh(\eta_a(m^a)^\ast)}^\ast
    =\paraa{\Kh(K^{n_a}((m_a)^\ast)\eta_a)}^\ast \\
  &=\paraa{K^{n_a+1}\paraa{(m^a)^\ast}\eta_a}^\ast
    =\eta_aK^{n_a+1}\paraa{(m^a)^\ast}^\ast
    =\eta_aK^{-n_a-1}(m^a)\\
  &= K^{-n_a-1+n_a}(m^a)\eta_a
    = K^{-1}(m^a)\eta_a,
\end{align*}
by using that $K(f^\ast)^\ast=K^{-1}(f)$, which implies that
$\Kh^\ast=\Kh^{-1}$. Finally, setting
\begin{align*}
  &\sigmah_1= \Kh^{-1}\quad \sigmah_2=\Kh^{-1}\quad \sigmah_3=\Kh^{-2}\qquad
  \tauh_1 = \Kh\quad \tauh_2 = \Kh\quad\tauh_3 = \Kh^2
\end{align*}
one easily checks that
$(\Omega^1(S^3_q),\{(\sigmah_a,\tauh_a)\}_{a=1}^3)$ is a
$\Sigma$-$\ast$-bimodule since $\Kh^\ast=\Kh^{-1}$ and
$\Kh(fmg)=K(f)\Kh(m)K(g)$ for all $f,g\in S^3_q$ and $m\in \Omega^1(S^3_q)$.

\section{$\sigmatau$-connections}

\noindent
In this section, we will introduce connections on
$\Sigma$-modules. For a $\sigmatau$-algebra $\Sigma$ and a
$\Sigma$-module $M$, these connections give covariant derivatives on
$M$, in the direction of a $\sigmatau$-derivation. Now, these
derivatives will satisfy a twisted Leibniz' rule, and our guiding
principle is to consider a $\sigmatau$-derivation $X$ as a covariant
derivative on the module consisting of the algebra itself;
i.e. $\nabla_X(f)=X(f)$ for $f\in\A$. It then follows that
\begin{align*}
  \nabla_X(fg) = X(fg) = \sigma(f)X(g)+X(f)\tau(g)
  =\sigma(f)\nabla_X(g) + X(f)\tau(g),
\end{align*}
which we generalize to arbitrary $\Sigma$-modules in the following way.

\begin{definition}\label{def:left.sigmatau.connection}
  Let $\Sigma=(\A,\{X_a\}_{a\in I})$ be a $\sigmatau$-algebra and let
  $(M,\{(\sigmah_a,\tauh_a)\}_{a\in I})$ be a left $\Sigma$-module. A
  left \emph{$\sigmatau$-connection on $M$} is a map
  $\nabla:\TSigma\times M\to M$ satisfying
  \begin{align*}
    &\nabla_{X}(\lambda_1m_{1} + \lambda_2m_{1}) = \lambda_1\nabla_{X}m_{1} + \lambda_2\nabla_{X}m_{2}\\
    &\nabla_{\lambda_1X + \lambda_2Y}m = \lambda_1\nabla_{X}m + \lambda_2\nabla_{Y}m\\
    &\nabla_{X_{a}}(fm) = \sigma_{a}(f)\nabla_{X_{a}}m + X_{a}(f)\hat{\tau}_{a}(m)
  \end{align*}
  for all $X,Y\in\TSigma$, $m, m_{1}, m_{2} \in M$, $\lambda_1,\lambda_2\in\complex,$ $f \in \A$  and $a\in I$.
\end{definition}

\noindent
For completeness, let us spell out the definition of right
$\sigmatau$-connections, as well as $\sigmatau$-bimodule connections.

\begin{definition}\label{def:right.sigmatau.connection}
  Let $\Sigma=(\A,\{X_a\}_{a\in I})$ be a $\sigmatau$-algebra and let
  $(M,\{(\sigmah_a,\tauh_a)\}_{a\in I})$ be a right $\Sigma$-module. A
  right \emph{$\sigmatau$-connection on $M$} is a map
  $\nabla:\TSigma\times M\to M$ satisfying
  \begin{align*}
    &\nabla_{X}(\lambda_1m_{1} + \lambda_2m_{1}) = \lambda_1\nabla_{X}m_{1} + \lambda_2\nabla_{X}m_{2}\\
    &\nabla_{\lambda_1X + \lambda_2Y}m = \lambda_1\nabla_{X}m + \lambda_2\nabla_{Y}m\\
    &\nabla_{X_{a}}(mf) = \sigmah_{a}(m)X_a(f) + \nabla_{X_a}(m)\tau_{a}(f)
  \end{align*}
  for all $X,Y\in\TSigma$, $m, m_{1}, m_{2} \in M$, $\lambda_1,\lambda_2\in\complex,$ $f \in \A$  and $a\in I$.
\end{definition}

\begin{definition}
  Let $\Sigma=(\A,\{X_a\}_{a\in I})$ be a $\sigmatau$-algebra and let
  $(M,\{(\sigmah_a,\tauh_a)\}_{a\in I})$ be a $\Sigma$-bimodule. If
  $\nabla$ is both a left and right $\sigmatau$-connection on $M$,
  then $\nabla$ is called a $\sigmatau$-bimodule connection.
\end{definition}

\begin{example}\label{ex:nablazero.def}
  Let $\Sigma = (\A,\{X_{a}\}_{a \in I})$ be a
  $(\sigma, \tau)$-algebra and let
  $(\A^{n}, \{(\hat{\sigma}_{a}^{0}, \hat{\tau}_{a}^{0})\}_{a \in I})$
  be the free $\Sigma$-bimodule as defined in
  Example~\ref{ex:free.module.sigma.structure}. Introducing
  $\nabla^{0} : T\Sigma \times \A^{n} \to \A^{n}$, defined by
  $\nabla^{0}_{X}(m) = X(m^{i})e_{i}$ for $m = m^{i}e_{i} \in \A^{n}$,
  one can easily see that
  \begin{eqnarray*}
    &&\nabla^{0}_{X}(\lambda_1m_{1} + \lambda_2m_{2}) = \lambda_1\nabla^{0}_{X}(m_{1})
       + \lambda_2\nabla^{0}_{X}(m_{2}) \\
    &&\nabla^{0}_{\lambda_1X + \lambda_2Y}(m) = \lambda_1\nabla^{0}_{X}(m) + \lambda_2\nabla^{0}_{Y}(m),
  \end{eqnarray*}
  for $m, m_{1}, m_{2} \in \A^{n}$, $\lambda_1,\lambda_2\in\complex$ and $X,Y\in\TSigma$. Moreover, for 
  $a\in I$, $f \in \A$ and $m \in \A^{n},$ one finds that
  \begin{eqnarray*}
    &\nabla^{0}_{X_{a}}(fm) = \sigma_{a}(f)\nabla^{0}_{X}(m) + X_{a}(f)\hat{\tau}^{0}_{a}(m)\\
    &\nabla^{0}_{X_a}(mf) = (\nabla^0_{X_a}m)\tau_a(f)+\sigmah^0(m)X_a(f)
  \end{eqnarray*}
  Hence, $\nabla^{0}$ is a $\sigmatau$-bimodule connection on the $\Sigma$-bimodule
  $(\A^{n}, \{(\hat{\sigma}_{a}^{0}, \hat{\tau}_{a}^{0})\}_{a \in I})$.
\end{example}

\noindent
In a slightly more general setting than
Example~\ref{ex:nablazero.def}, let us formulate the existence of
$\sigmatau$-connections on free modules in the following way.

\begin{proposition} \label{sigma tau conn arb}
  $\Sigma=(\A,\{X_a\}_{a\in I})$ be a $\sigmatau$-algebra and let
  $(\A^{n}, \{(\hat{\sigma}_{a}, \hat{\tau}_{a})\}_{a \in I})$ be a
  free left $\Sigma$-module. Given $\Gamma_{ai}^{j} \in \A$, for $a\in I$
  and $1\leq i,j\leq n$, there exists a left $(\sigma, \tau)$-connection on
  $(\A^{n}, \{(\sigmah_{a}, \tauh_{a})\}_{a \in I})$ such that
  \begin{align*}
    \nabla_{X_{a}}(e_{i}) = \Gamma_{ai}^{j}e_{j}
  \end{align*}
  for $a \in I$ and $i,j = 1, \ldots, n$.
\end{proposition}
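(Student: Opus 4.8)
The plan is to define the connection explicitly on the generators $X_a$ and then extend $\complex$-linearly to all of $\TSigma$. Since $\A^n$ is free with basis $e_1,\dots,e_n$, every $m\in\A^n$ has unique coordinates $m=m^ie_i$, so for each $a\in I$ I would set
\[
  \nabla_{X_a}(m) = \sigma_a(m^i)\Gamma_{ai}^j e_j + X_a(m^i)\tauh_a(e_i),
\]
which is precisely the expression forced by the twisted Leibniz rule once one demands $\nabla_{X_a}(e_i)=\Gamma_{ai}^je_j$. Because the coordinate maps $m\mapsto m^i$ are $\complex$-linear and $\sigma_a,X_a$ are $\complex$-linear, each $\nabla_{X_a}$ is immediately a $\complex$-linear endomorphism of $\A^n$, which gives the first of the three defining properties.

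Next I would check the prescribed values and the Leibniz rule. For $m=e_i$ the coordinates are $m^k=\delta^k_i$ (i.e. $0$ or the unit), and since $X_a(1)=0$ and $\sigma_a(1)=1$ for the unital endomorphism $\sigma_a$, the formula collapses to $\nabla_{X_a}(e_i)=\Gamma_{ai}^je_j$, as required. For the Leibniz rule, I would write $fm=(fm^i)e_i$ and use that $\sigma_a$ is an algebra endomorphism, $\sigma_a(fm^i)=\sigma_a(f)\sigma_a(m^i)$, together with $X_a(fm^i)=\sigma_a(f)X_a(m^i)+X_a(f)\tau_a(m^i)$. The expression for $\nabla_{X_a}(fm)$ then splits into $\sigma_a(f)\nabla_{X_a}(m)$ plus a leftover term $X_a(f)\,\tau_a(m^i)\tauh_a(e_i)$. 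The key observation is that the $\Sigma$-module identity $\tauh_a(m)=\tauh_a(m^ie_i)=\tau_a(m^i)\tauh_a(e_i)$ identifies this leftover with $X_a(f)\tauh_a(m)$, yielding exactly $\nabla_{X_a}(fm)=\sigma_a(f)\nabla_{X_a}(m)+X_a(f)\tauh_a(m)$.

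The remaining and most delicate step is to promote the family $\{\nabla_{X_a}\}_{a\in I}$ to a single map $\nabla:\TSigma\times\A^n\to\A^n$ that is $\complex$-linear in its first slot via $\nabla_{\lambda^aX_a}=\lambda^a\nabla_{X_a}$. The main obstacle is well-definedness: the $X_a$ need not be linearly independent in $\TSigma\subseteq\End_\complex(\A)$, and since the data $\sigma_a,\tauh_a,\Gamma_{ai}^j$ genuinely depend on the label $a$, a relation $\sum_a\nu^aX_a=0$ among the generators does not by itself force $\sum_a\nu^a\nabla_{X_a}=0$. I would resolve this by fixing a basis of $\TSigma$ extracted from $\{X_a\}_{a\in I}$, defining $\nabla$ on that basis by the formula above and extending by linearity, so that well-definedness is automatic; note that the Leibniz rule is only demanded on the generators $X_a$, which is the correct requirement since a general element of $\TSigma$ is not itself a $(\sigma,\tau)$-derivation. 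It is worth flagging that prescribing $\nabla_{X_a}(e_i)=\Gamma_{ai}^je_j$ for \emph{every} $a\in I$ with fully arbitrary $\Gamma_{ai}^j$ is consistent precisely when the $X_a$ are linearly independent (or the $\Gamma$'s respect the linear relations among them); in the intended applications, such as the $S^3_q$ example with independent $Y_1,Y_2,Y_3$, this causes no difficulty.
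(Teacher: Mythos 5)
Your proof is correct and follows essentially the same route as the paper: you define $\nabla_{X_a}$ on coordinates by the formula forced by the twisted Leibniz rule, $\nabla_{X_a}(m^ie_i)=\sigma_a(m^i)\Gamma_{ai}^je_j+X_a(m^i)\tauh_a(e_i)$, and verify linearity and the Leibniz identity by the same computation the authors give. Your closing remark about well-definedness of the linear extension to $\TSigma$ when the $X_a$ are linearly dependent identifies a genuine subtlety that the paper's proof passes over silently, and your resolution (extending from a basis chosen among the $X_a$, noting that arbitrary $\Gamma_{ai}^j$ are consistent exactly when the $X_a$ are independent or the $\Gamma$'s respect the relations) is the correct way to handle it.
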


\begin{proof}
One defines $\nabla_{X_a}(e_{i}) = \Gamma_{ai}^{j}e_{j}$ and 
\begin{eqnarray*}
&&\nabla_{X_{a}}(m^{i}e_{i}) = \sigma_{a}(m^{i})\nabla_{X_{a}}(e_{i}) + X_{a}(m^{i})\tauh_{a}(e_{i}),\\
&& \nabla_{\lambda X_{a} + \mu X_{b}}(e_{i}) = \lambda\nabla_{X_{a}}(e_{i}) + \mu\nabla_{X_{b}}(e_{i})
\end{eqnarray*}
for $m = m^{i}e_{i} \in \A^{n},$ $a\in I$ and
$\lambda, \mu\in \complex$.  It follows that $\nabla$ satisfies the
linearity properties of a $\sigmatau$-connection, and moreover
\begin{eqnarray*}
  \nabla_{X_{a}}(fm) &=& \nabla_{X_{a}}(fm^{i}e_{i})
                         = \sigma_{a}(fm^{i})\nabla_{X_{a}}(e_{i}) + X_{a}(fm^{i})\tauh_{a}(e_{i})\\
&=& \sigma_{a}(f)\sigma_{a}(m^{i})\nabla_{X_{a}}(e_{i}) + (\sigma_{a}(f)X_{a}(m^{i}) + X_{a}(f)\tau_{a}(m^{i}))\tauh_{a}(e_{i})\\
&=& \sigma_{a}(f)(\sigma_{a}(m^{i})\nabla_{X_{a}}(e_{i}) + X_{a}(m^{i})\tauh_{a}(e_{i})) + X_{a}(f)\tau_{a}(m^{i})\tauh_{a}(e_{i})\\
&=& \sigma_{a}(f)\nabla_{X_{a}}(m) + X_{a}(f)\tauh_{a}(m),
\end{eqnarray*}
for $f \in \A$ and $m \in \A^{n}$, showing that $\nabla$ is indeed a left $\sigmatau$-connection.
\end{proof}

\noindent
Given a $\sigmatau$-connection $\nabla$ on a $\Sigma$-module
$(M, \{(\hat{\sigma}_{a}, \hat{\tau}_{a})\}_{a \in I})$ together
with an endomorphism $T:M\to M$, one can construct a
$\sigmatau$-connection on the image of $T$ as follows.

\begin{proposition}\label{con on proj mod}
  Let $(M,\{(\hat{\sigma}_{a}, \hat{\tau}_{a})\}_{a \in I})$ be a left
  $\Sigma$-module and let $\nabla$ be a left
  $(\sigma, \tau)$-connection on
  $(M,\{(\hat{\sigma}_{a}, \hat{\tau}_{a})\}_{a \in I}).$ If
  $T: M \to M$ is an endomorphism then $ \tilde{\nabla} = T \circ \nabla$
  is a left $(\sigma, \tau)$-connection on
  $(T(M), \{(T \circ \hat{\sigma}_{a}, T\circ \hat{\tau}_{a})\}_{a \in
    I}).$
\end{proposition}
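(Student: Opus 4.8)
The plan is to verify directly that $\tilde{\nabla} = T\circ\nabla$ satisfies the three defining properties of a left $\sigmatau$-connection from Definition~\ref{def:left.sigmatau.connection}, now taken on the module $T(M)$ equipped with the $\Sigma$-module structure $(T\circ\hat{\sigma}_a, T\circ\hat{\tau}_a)$. Note first that this structure is already known to be a genuine left $\Sigma$-module by Proposition~\ref{prop:TM}, so the only thing left to check is that $\tilde{\nabla}$ is a connection for it. Before anything else I would confirm that $\tilde{\nabla}$ is well-defined as a map $\TSigma\times T(M)\to T(M)$: for $m\in T(M)$ one has $\tilde{\nabla}_X m = T(\nabla_X m)\in T(M)$, so the codomain is correct, and since $T$ is an $\A$-module endomorphism, $T(M)$ is a submodule of $M$, which guarantees $fm\in T(M)$ whenever $m\in T(M)$ and $f\in\A$ (writing $m=T(m')$ and using $fm=T(fm')$).

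The two linearity axioms are immediate. Since $T$ is $\complex$-linear and $\nabla$ is linear in its module argument, $\tilde{\nabla}_X(\lambda_1 m_1 + \lambda_2 m_2) = T(\nabla_X(\lambda_1 m_1+\lambda_2 m_2)) = \lambda_1 T(\nabla_X m_1)+\lambda_2 T(\nabla_X m_2)$, and linearity in $X$ follows identically from linearity of $\nabla$ in its first argument. These require no computation beyond pushing $T$ through sums and scalars.

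For the twisted Leibniz rule I would apply $T$ to the Leibniz rule that $\nabla$ already satisfies on $M$. Concretely, for $m\in T(M)$, $f\in\A$ and $a\in I$,
\begin{align*}
  \tilde{\nabla}_{X_a}(fm) = T\paraa{\nabla_{X_a}(fm)}
  = T\paraa{\sigma_a(f)\nabla_{X_a}m + X_a(f)\hat{\tau}_a(m)}.
\end{align*}
The key step is that $\sigma_a(f)$ and $X_a(f)$ are elements of $\A$, so the $\A$-linearity of $T$ lets them pass through $T$, yielding $\sigma_a(f)\,T(\nabla_{X_a}m) + X_a(f)\,T(\hat{\tau}_a(m)) = \sigma_a(f)\tilde{\nabla}_{X_a}m + X_a(f)(T\circ\hat{\tau}_a)(m)$, which is exactly the required identity with $\tilde{\tau}_a = T\circ\hat{\tau}_a$.

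The proof is therefore a routine verification, and the only point that carries any content is precisely the observation that $T$ commutes past the algebra-valued coefficients $\sigma_a(f)$ and $X_a(f)$ in the Leibniz expansion; this is just the $\A$-module homomorphism property of $T$, so I expect no genuine obstacle. The sole thing to state with care is that the construction is consistent with the $\Sigma$-module structure on $T(M)$ supplied by Proposition~\ref{prop:TM}, so that $\tilde{\tau}_a$ appearing in the twisted Leibniz rule coincides with the $\hat{\tau}$-datum of that module.
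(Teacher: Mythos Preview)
Your proof is correct and follows essentially the same approach as the paper's: invoke Proposition~\ref{prop:TM} for the $\Sigma$-module structure on $T(M)$, note the linearity properties are immediate, and verify the twisted Leibniz rule by applying $T$ and using its $\A$-linearity to pull out $\sigma_a(f)$ and $X_a(f)$. Your added remarks on well-definedness (codomain $T(M)$ and $T(M)$ being a submodule) are a bit more explicit than the paper, but the argument is otherwise identical.
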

\begin{proof}
  By Proposition~\ref{prop:TM}
  $(T(M), \{(T \circ \sigmah_{a}, T \circ \tauh_{a})\}_{a \in I})$ is
  a $\Sigma$-module.  Furthermore, it is clear that $\nablat$
  satisfies the linearity properties in
  Definition~\ref{def:left.sigmatau.connection} since $T$ is a module
  endomorphism. For $f \in A,$ $m \in M$ and $X_{a} \in T\Sigma,$ one
  obtains
  \begin{align*}
    \tilde{\nabla}_{X_{a}}(fm)
    &= T(\nabla_{X_{a}}(fm))
      = T(\sigma_{a}(f)\nabla_{X_{a}}m) + T(X_{a}(f)\hat{\tau}_{a}(m))\\
    &= \sigma_{a}(f)T(\nabla_{X_{a}}m) + X_{a}(f)T(\hat{\tau}_{a}(m))
    = \sigma_{a}(f)\tilde{\nabla}_{X_{a}}m + X_{a}(f)(T \circ \tauh_{a})(m),
  \end{align*}
  showing that $\nablat$ is a left $\sigmatau$-connection on $(T(M), \{(T \circ \sigmah_{a}, T \circ \tauh_{a})\}_{a \in I})$.  
\end{proof}

\noindent
In particular, one can apply Proposition~\ref{con on proj mod} to the
case when $M=\A^n$ and $T$ is a projection, to obtain the following
result.

\begin{theorem}
  There exists a $\st$-connection on every projective $\Sigma$-module.
\end{theorem}

\begin{proof}
  Let $(M,\{(\tilde{\sigma}_{a}, \tilde{\tau}_{a})\}_{a \in I})$ be a
  $\Sigma$-module where such that $M$ is a projective $\A$-module. By
  Proposition \ref{proj mod iso} there exists a projection
  $p :\A^{n} \to \A^{n}$ such that
  \begin{align}\label{eq:p.proj.iso}
    (p(\A^{n}), \{(p \circ \hat{\sigma}_{a}, p \circ \hat{\tau}_{a})\}_{a \in I}) \simeq (M, \{(\tilde{\sigma}_{a}, \tilde{\tau}_{a})\}_{a \in I}).
  \end{align}
  Moreover, by Proposition \ref{sigma tau conn arb}, there exists a
  $(\sigma, \tau)$-connection $\nabla$ on
  $(\A^{n}, \{(\sigmah_{a}, \tauh_{a})\}_{a \in I})$. Defining
  $\tilde{\nabla} = p \circ \nabla$, it follows from
  Proposition~\ref{con on proj mod} that $\tilde{\nabla}$ is a
  $(\sigma, \tau)$-connection on
  $(p(\A^{n}), \{(p \circ \hat{\sigma}_{a}, p \circ
  \hat{\tau}_{a})\}_{a \in I}).$ By the isomorphism in
  \eqref{eq:p.proj.iso}, it is clear that there exists a
  $\sigmatau$-connection on the projective $\Sigma$-module
  $(M, \{(\tilde{\sigma}_{a}, \tilde{\tau}_{a})\}_{a \in I})$.
\end{proof}

\noindent
If $(M, \{(\sigmah_{a}, \tauh_{a})\}_{a \in I})$ is a
$\Sigma$-$\ast$-bimodule, one can demand that connections are
compatible with the $\ast$-structure on the module.

\begin{definition}
  Let $(M, \{(\sigmah_{a}, \tauh_{a})\}_{a \in I})$ be a
  $\Sigma$-$\ast$-bimodule. A $\sigmatau$-$\ast$-bimodule connection
  on $(M, \{(\sigmah_{a}, \tauh_{a})\}_{a \in I})$ is a
  $\sigmatau$-bimodule connection $\nabla$ such that
  \begin{align}\label{eq:sigmatau.star.connection}
    (\nabla_{X_a}m)^\ast=\nabla_{X_a^\ast}m^\ast  
  \end{align}
  for all $a\in I$ and $m\in M$.
\end{definition}

\noindent
In the above definition, one assumes that $\nabla$ is a
$\sigmatau$-bimodule connection. However, given
\eqref{eq:sigmatau.star.connection}, a left $\sigmatau$-connection is
automatically a $\sigmatau$-bimodule connection.

\begin{proposition}\label{prop:star.connection.bimodule}
  Assume that $(M,\{(\sigmah_{a}, \tauh_{a})\}_{a \in I})$ is a
  $\Sigma$-$\ast$-bimodule and let $\nabla$ be a left
  $\sigmatau$-connection on
  $(M,\{(\sigmah_{a}, \tauh_{a})\}_{a \in I})$. If
  $(\nabla_{X_a}(m))^{\ast} = \nabla_{X_a^{\ast}}(m^{\ast})$ for all
  $a\in I$ and $m \in M$ then $\nabla$ is a
  $(\sigma, \tau)$-$\ast$-bimodule connection.
\end{proposition}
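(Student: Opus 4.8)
The plan is to show that $\nabla$ satisfies the right $\sigmatau$-Leibniz rule of Definition~\ref{def:right.sigmatau.connection}. Since $\nabla$ is assumed to be a left $\sigmatau$-connection obeying the $\ast$-compatibility \eqref{eq:sigmatau.star.connection}, once the right Leibniz rule is established $\nabla$ becomes a $\sigmatau$-bimodule connection, and hence a $\sigmatau$-$\ast$-bimodule connection.

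First I would use \eqref{eq:sigmatau.star.connection} to convert a right-handed expression into a left-handed one. Since $\iota$ is an involution and $X_a^\ast=X_{\iota(a)}$, applying \eqref{eq:sigmatau.star.connection} with $a$ replaced by $\iota(a)$ gives $\nabla_{X_a}\paraa{(m')^\ast}=\paraa{\nabla_{X_{\iota(a)}}(m')}^\ast$ for all $m'\in M$. Taking $m'=(mf)^\ast=f^\ast m^\ast$ then yields
\[
  \nabla_{X_a}(mf) = \paraa{\nabla_{X_{\iota(a)}}(f^\ast m^\ast)}^\ast.
\]

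Next I would expand the inner term with the left Leibniz rule for $X_{\iota(a)}$,
\[
  \nabla_{X_{\iota(a)}}(f^\ast m^\ast)
  = \sigma_{\iota(a)}(f^\ast)\nabla_{X_{\iota(a)}}(m^\ast)
  + X_{\iota(a)}(f^\ast)\tauh_{\iota(a)}(m^\ast),
\]
and then apply the involution term by term, repeatedly using the $\ast$-bimodule identity $(gm')^\ast=(m')^\ast g^\ast$ for $g\in\A$ and $m'\in M$. The bookkeeping rests on the defining relations of the structures involved: from $\sigma_{\iota(a)}=\tau_a^\ast$ one obtains $\sigma_{\iota(a)}^\ast=\tau_a$, from $\sigmah_{\iota(a)}=\tauh_a^\ast$ one obtains $\tauh_{\iota(a)}^\ast=\sigmah_a$, and $X_{\iota(a)}^\ast=X_a$. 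Combining these with the $\ast$-condition in the form $\paraa{\nabla_{X_{\iota(a)}}(m^\ast)}^\ast=\nabla_{X_a}(m)$ collapses the two summands into $\nabla_{X_a}(m)\tau_a(f)$ and $\sigmah_a(m)X_a(f)$ respectively, which is exactly
\[
  \nabla_{X_a}(mf) = \sigmah_a(m)X_a(f) + \nabla_{X_a}(m)\tau_a(f),
\]
the right $\sigmatau$-Leibniz rule.

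I do not expect a conceptual obstacle here; the computation is essentially careful bookkeeping. The one point demanding attention is tracking the involution $\iota$ consistently across all four families of maps $\sigma_a,\tau_a,\sigmah_a,\tauh_a$ together with $X_a$, and correctly reversing the order of the algebra element and the module element each time the identity $(gm')^\ast=(m')^\ast g^\ast$ is invoked. Once the right Leibniz rule holds, $\nabla$ is simultaneously a left and a right $\sigmatau$-connection, hence a $\sigmatau$-bimodule connection; since it satisfies \eqref{eq:sigmatau.star.connection} by hypothesis, it is a $\sigmatau$-$\ast$-bimodule connection, completing the proof.
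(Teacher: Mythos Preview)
Your proposal is correct and follows essentially the same route as the paper's proof: rewrite $\nabla_{X_a}(mf)$ as $\paraa{\nabla_{X_{\iota(a)}}(f^\ast m^\ast)}^\ast$ via the $\ast$-condition, expand with the left Leibniz rule for $X_{\iota(a)}$, and then use $\sigma_{\iota(a)}^\ast=\tau_a$, $X_{\iota(a)}^\ast=X_a$, and $\tauh_{\iota(a)}^\ast=\sigmah_a$ to obtain the right Leibniz rule. The bookkeeping you flag (tracking $\iota$ through all four map families and the order reversal under $\ast$) is exactly what the paper does, and your derivation of $\tauh_{\iota(a)}^\ast=\sigmah_a$ from the defining relation $\sigmah_{\iota(a)}=\tauh_a^\ast$ via $a\mapsto\iota(a)$ is the one identity the paper invokes without spelling out.
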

\begin{proof}
  To prove that $\nabla$ is a $\sigmatau$-$\ast$-bimodule connection, one needs to show that
  \begin{align}\label{eq:star.bimodule.right}
    \nabla_{X_a}(mf) =
    (\nabla_{X_a}m)\tau_a(f) + \sigmah_a(m)X_a(f)
  \end{align}
  for all $a\in I$, $m\in M$ and $f\in\A$. Using that $\nabla$ is a
  left $\sigmatau$-connection, together with
  $(\nabla_{X_a}(m))^{\ast} = \nabla_{X_a^{\ast}}(m^{\ast})$, one
  finds that
  \begin{align*}
    \nabla_{X_a}(mf)
    &= \nabla_{X_a}\paraa{(f^\ast m^\ast)^\ast}
      = \paraa{\nabla_{X_a^\ast}f^\ast m^\ast}^\ast
      = \paraa{\nabla_{X_{\iota(a)}}f^\ast m^\ast}^\ast\\
    &= \parab{
      \sigma_{\iota(a)}(f^\ast)\nabla_{X_a^\ast}m^\ast
      + X_{a}^\ast(f^\ast)\tauh_{\iota(a)}(m^\ast)
    }^\ast\\
    &= \paraa{\nabla_{X_a^\ast}m^\ast}^\ast\sigma_{\iota(a)}^\ast(f)
      +\tauh^\ast_{\iota(a)}(m)X_a(f)\\
    &= (\nabla_{X_a}m)\tau_a(f) + \tauh^\ast_{\iota(a)}(m)X_a(f).
  \end{align*}
  Since $(M,\{(\sigmah_{a}, \tauh_{a})\}_{a \in I})$ is a
  $\Sigma$-$\ast$-bimodule it holds that
  $\tauh^\ast_{\iota(a)}=\sigmah_a$, from which we conclude that
  \eqref{eq:star.bimodule.right} is satisfied. Hence, $\nabla$ is a
  $\sigmatau$-$\ast$-bimodule connection.  
\end{proof}

\subsection{Metric compatibility of $\sigmatau$-connections}

\noindent
From a differential geometric point of view, the Riemannian metric
gives an inner product of two vector fields at each point. In
noncommutative geometry, the role of the metric is often played by a
hermitian form acting on pairs of elements of a module. The symmetry
of the Riemannian metric is replaced by a skew-symmetry with respect
to the $\ast$-structure of the algebra, which is more adapted to the
setting of noncommutative algebras.

After recalling the concept of a hermitian form, we will proceed to
define compatibility between hermitian forms and
$\sigmatau$-connections. In the following, we will for simplicity focus on
left modules but analogous results may be obtained for right modules.

\begin{definition}
  Let $\A$ be a $\ast$-algebra and let $M$ be a left $\A$-module. A
  hermitian form on $M$ is a map $h: M\times M \to \A$ such
  that
  \begin{align*}
    &h(\lambda_1 m_{1} + \lambda_2m_{2}, m_{3}) = \lambda_1 h(m_{1}, m_{3}) + \lambda_2h(m_{2}, m_{3})\\
    &h(fm_{1}, m_{2}) = fh(m_{1}, m_{2})\\
    &h(m_{1}, m_{2})^{\ast} = h(m_{2}, m_{1})
  \end{align*}
  for $m_{1}, m_{2}, m_{3}  \in M$, $\lambda_1,\lambda_2\in\complex$ and $f \in \A$.
\end{definition}

\noindent Given an hermitian form $h$ on the module $M$, one may
define $\hh:M\to M^\ast$ (where $M^\ast$ denotes the dual module of
$M$) as
\begin{align*}
  \hh(m_1)(m_2) = h(m_1,m_2)
\end{align*}
for $m_1,m_2\in M$. In differential geometry, $\hh$ corresponds to the
isomorphism between vector fields and differential forms induced by
the metric on a Riemannian manifold. However, in the general setting
above, there is no need for $\hh$ to be a bijection.

\begin{definition}
  Let $h$ be a hermitian form on $M$. If $\hh:M\to M^\ast$ is a
  bijection then we say that $h$ is invertible.
\end{definition}

\noindent
For a free module $\A^n$ with basis $e_1,\ldots,e_n$, a hermitian form
$h$ is given in terms of its components $h_{ij}=h(e_i,e_j)$ as
\begin{align*}
  h(m_1,m_2) = m_1^ih_{ij}(m_2^j)^\ast
\end{align*}
for $m_1=m_1^ie_i$ and $m_2=m_2^ie_i$.  A natural class of hermitian forms for
$\sigmatau$-$\ast$-algebras consists of those that are invariant
under the action of $\sigma_a$ and $\tau_a$ for $a\in I$; for a free
module over a $\sigmatau$ $\ast$-algebra this amounts to
\begin{align*}
  \sigma_a(h_{ij})=h_{ij}\qand
  \tau_a(h_{ij})=h_{ij}
\end{align*}
for $a\in I$, implying that
\begin{align*}
  \sigma_a\paraa{h(m_1,m_2)}
  &= \sigma_a(m_1^i)h_{ij}\sigma_a^\ast(m_2^j)^\ast
  = h\paraa{\sigmahz_a(m_1),\tauhz_{\iota(a)}(m_2)}
\end{align*}
for $m_1,m_2\in M$. For general $\Sigma$-modules we make the following definition.

\begin{definition}\label{def l-r hermitian form}
  Let $\Sigma=(\A,\{X_a\}_{a\in I},\iota)$ be a $\sigmatau$-$\ast$-algebra
  and let $(M,\{(\hat{\sigma}_{a}, \hat{\tau}_{a})\}_{a \in I})$ be a
  left $\Sigma$-module. A hermitian form $h$ on $M$ is called 
  \emph{$(\sigma ,\tau)$-invariant} if 
  \begin{align*}
    &\sigma_{a}\paraa{ h(m_{1},m_{2})} = h\paraa{\sigmah_{a}(m_{1}), \tauh_{\iota(a)}(m_{2})} \\
    &\tau_{a}\paraa{h(m_{1}, m_{2})} = h\paraa{\tauh_{a}(m_{1}), \sigmah_{\iota(a)}(m_{1})}
  \end{align*}
  for $m_{1}, m_{2} \in M$.
\end{definition}

\noindent
Note that for a $\Sigma$-$\ast$-bimodule one can write the
condition for a hermitian form to be invariant as
\begin{align*}
  &\sigma_a\paraa{h(m_1,m_2)}=h\paraa{\sigmah_a(m_1),\sigmah_a^\ast(m_2)}\\
  &\tau_a\paraa{h(m_1,m_2)}=h\paraa{\tauh_a(m_1),\tauh_a^\ast(m_2)},
\end{align*}
since $\sigmah_a^\ast = \tauh_{\iota(a)}$.
Let us now approach a definition of what it means for a
connection to be compatible with a hermitian form on a module. As a
motivation, we consider the case of a free module and a hermitian form
such $X_a(h_{ij})=0$.

To this end, let $\Sigma=(\A,\{X_a\}_{a\in I},\iota)$ be a $\sigmatau$
$\ast$-algebra and let $\nabla^0$ be the $\sigmatau$-connection on the
free $\Sigma$-module $(\A^n,\{\sigmahz_a,\tauhz_a\})$ as introduced in
Example~\ref{ex:nablazero.def}. For a $\sigmatau$-invariant hermitian form
$h:M\times M\to\A$, assuming $X_a(h_{ij})=0$ for $a\in I$, one finds that
\begin{align*}
  X_a\paraa{h(m_1,m_2)}
  &= X_a\paraa{m_1^ih_{ij}(m_2^j)^\ast}
    = \sigma_a(m_1^i)X_a\paraa{h_{ij}(m_2^j)^\ast}
    + X_a(m_1^i)\tau_a\paraa{h_{ij}(m_2^j)^\ast}\\
  &= \sigma_a(m_1^i)h_{ij}X_a\paraa{(m_2^j)^\ast}
    +X_a(m_1^i)h_{ij}\tau_a\paraa{(m_2^j)^\ast}\\
  &=  \sigma_a(m_1^i)h_{ij}X_{a}^\ast(m_2^j)^\ast
    +X_a(m_1^i)h_{ij}\tau_a^\ast(m_2^j)^\ast\\
  &=  \sigma_a(m_1^i)h_{ij}X_{a}^\ast(m_2^j)^\ast
    +X_a(m_1^i)h_{ij}\sigma_{\iota(a)}(m_2^j)^\ast\\
  &=  h\paraa{\sigmahz_a(m_1),\nabla^0_{X_a^\ast}m_2}
    + h\paraa{\nabla^0_{X_a}m_1,\sigmahz_{\iota(a)}(m_2)},
\end{align*}
motivating the following definition.

\begin{definition}
  Let $\Sigma=(\A,\{X_a\}_{a\in I},\iota)$ be a $\st$-$\ast$-algebra,
  let $\nabla$ be a left $\sigmatau$-connection on the $\Sigma$-module
  $(M,\{(\hat{\sigma}_{a}, \hat{\tau}_{a})\}_{a \in I})$ and let $h$
  be a $\sigmatau$-invariant hermitian form on $M$. If
  \begin{align}\label{eq:metric.compatibility}
    X_a\paraa{h(m_1,m_2)}
    = h\paraa{\sigmah_a(m_1),\nabla_{X_{\iota(a)}}m_2}
    +h\paraa{\nabla_{X_a}m_1,\sigmah_{\iota(a)}(m_2)}
  \end{align}
  for all $m_1,m_2\in M$ and $a\in I$, then we say that \emph{$\nabla$
    is compatible with $h$} and that $\nabla$ is a \emph{metric
    connection}.
\end{definition}

\noindent
As expected, it is sufficient to check condition
\eqref{eq:metric.compatibility} on a set of generators. Let us
formulate this as follows.

\begin{proposition}\label{prop:metric.cond.on.generators}
  Let $\Sigma=(\A,\{X_a\}_{a\in I},\iota)$ be a $\st$-$\ast$-algebra,
  let $\nabla$ be a left $\sigmatau$-connection on the $\Sigma$-module
  $(M,\{(\hat{\sigma}_{a}, \hat{\tau}_{a})\}_{a \in I})$ and let $h$
  be a $\sigmatau$-invariant hermitian form on $M$.
  If $\{e_i\}_{i=1}^n$ is a set of generators of $M$ such that
  \begin{align}\label{eq:hermitian.compatible.generators}
    X_a\paraa{h(e_i,e_j)}
    = h\paraa{\sigmah_a(e_i),\nabla_{X_{\iota(a)}}e_j}
    +h\paraa{\nabla_{X_a}e_i,\sigmah_{\iota(a)}(e_j)}    
  \end{align}
  for $a\in I$ and $i,j=1,\ldots,n$, then $\nabla$ is compatible with
  $h$.
\end{proposition}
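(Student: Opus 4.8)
The plan is to reduce the general compatibility condition \eqref{eq:metric.compatibility} for arbitrary module elements to the generator condition \eqref{eq:hermitian.compatible.generators} by expanding an arbitrary element in terms of the generators and using the twisted Leibniz rules that govern each of the three maps appearing in the identity, namely the $\sigmatau$-derivation $X_a$, the module map $\sigmah_a$, and the $\sigmatau$-connection $\nabla_{X_a}$. Writing $m_1 = f^i e_i$ and $m_2 = g^j e_j$ for $f^i, g^j \in \A$, the goal is to verify that the left-hand side $X_a\paraa{h(m_1,m_2)}$ equals the right-hand side once we know the identity holds on the $e_i$.

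First I would compute the left-hand side. Using linearity of $h$ in the first slot and the conjugate-linearity coming from $h(m_1,m_2)^\ast = h(m_2,m_1)$, one gets $h(m_1,m_2) = f^i h(e_i,e_j) (g^j)^\ast$. Applying $X_a$ and repeatedly using the $\sigmatau$-derivation rule \eqref{(sigma, tau der} splits this into three groups of terms: a term where $X_a$ lands on $f^i$, a term where it lands on $h(e_i,e_j)$, and a term where it lands on $(g^j)^\ast$, each dressed with appropriate $\sigma_a$ and $\tau_a$ factors. The middle group is exactly where the generator hypothesis \eqref{eq:hermitian.compatible.generators} enters, after pulling the surrounding $\sigma_a(f^i)$ and $\tau_a\paraa{(g^j)^\ast}$ factors inside $h$ via the module-map identities $\sigmah_a(f m) = \sigma_a(f)\sigmah_a(m)$ and the defining relations of a $\sigmatau$-invariant hermitian form in Definition~\ref{def l-r hermitian form}.

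Next I would compute the two terms on the right-hand side separately. For $h\paraa{\nabla_{X_a}m_1, \sigmah_{\iota(a)}(m_2)}$, expand $\nabla_{X_a}(f^i e_i)$ by the connection's twisted Leibniz rule from Definition~\ref{def:left.sigmatau.connection}, producing a $\sigma_a(f^i)\nabla_{X_a}e_i$ piece and an $X_a(f^i)\tauh_a(e_i)$ piece; similarly expand $\sigmah_{\iota(a)}(g^j e_j) = \sigma_{\iota(a)}(g^j)\sigmah_{\iota(a)}(e_j)$. For the term $h\paraa{\sigmah_a(m_1), \nabla_{X_{\iota(a)}}m_2}$, expand both slots analogously. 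The main bookkeeping task is then to match these expansions term-by-term against the three groups obtained on the left-hand side; the $\ast$-algebra relations $\sigma_a^\ast = \tau_{\iota(a)}$ (and the corresponding module-level identity $\sigmah_a^\ast = \tauh_{\iota(a)}$) are what convert the conjugated $\sigma$-factors on the left into the $\tau$-factors that appear when $X_a$ or $\sigmah$ acts on $(g^j)^\ast$.

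The step I expect to be the main obstacle is the careful tracking of $\sigma$ versus $\tau$ and the indices $a$ versus $\iota(a)$ when the maps act on the starred coefficients $(g^j)^\ast$: every application of $X_a$ or $\sigmah_a$ to a conjugated element produces a $\tau$-type factor through the relations $X_a\paraa{(g^j)^\ast} = \paraa{X_{\iota(a)}(g^j)}^\ast$ governed by $\sigma_a^\ast = \tau_{\iota(a)}$, and it is easy to mismatch which of the two cross-terms in \eqref{eq:metric.compatibility} absorbs which factor. Once the conventions are fixed consistently, the remaining verification is a purely mechanical comparison that closes the proof, so I would present the coefficient expansion in full but only sketch the final term-matching.
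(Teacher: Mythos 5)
Your plan is correct and follows essentially the same route as the paper's proof: expand $h(m_1,m_2)=f^i\,h(e_i,e_j)\,(g^j)^\ast$ and the two terms on the right via the twisted Leibniz rules for $X_a$, $\nabla_{X_a}$ and $\sigmah_a$, insert the generator identity in the middle group, and absorb the remaining cross terms using the $\st$-invariance of $h$ together with $\sigma_{\iota(a)}=\tau_a^\ast$ (the paper simply runs the same computation starting from the right-hand side). The only cosmetic point is that you do not need, and the proposition does not assume, the $\ast$-bimodule identity $\sigmah_a^\ast=\tauh_{\iota(a)}$; the invariance conditions of Definition~\ref{def l-r hermitian form} already supply exactly the conversions you describe.
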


\begin{proof}
  Assume that \eqref{eq:hermitian.compatible.generators} is satisfied
  for $a\in I$ and $i,j=1,\ldots,n$, and let $m,n\in M$ be arbitrary
  elements written as $m=m^ie_i$ and $n=n^ie_i$. Then
  \begin{align*}
    h\paraa{
    &\sigmah_a(m^ie_i),\nabla_{X_{\iota(a)}}(n^je_j)}
      +h\paraa{\nabla_{X_a}(m^ie_i),\sigmah_{\iota(a)}(n^je_j)} \\
    &=\sigma_a(m^i)h\paraa{\sigmah_a(e_i),\sigma_{\iota(a)}(n^j)\nabla_{X_{\iota(a)}}e_j+X_{\iota(a)}(n^j)\tauh_{\iota(a)}(e_j)}\\
    &\qquad + h\paraa{\sigma_a(m^i)\nabla_{X_a}e_i+X_a(m^i)\tauh_a(e_i),\sigmah_{\iota(a)}(e_j)}\sigma_{\iota(a)}(n^j)^\ast \\
    &=\sigma_a(m^i)\parab{h\paraa{\sigmah_a(e_i),\nabla_{X_{\iota(a)}}e_j}\sigma_{\iota(a)}(n^j)^\ast
      +h\paraa{\sigmah_a(e_i),\tauh_{\iota(a)}(e_j)}X_{\iota(a)}(n^j)^\ast}\\
    &\qquad + \parab{\sigma_a(m^i)h\paraa{\nabla_{X_a}e_i,\sigmah_{\iota(a)}(e_j)}
      +X_a(m^i)h\paraa{\tauh_a(e_i),\sigmah_{\iota(a)}(e_j)}}\sigma_{\iota(a)}(n^j)^\ast
  \end{align*}
  and using \eqref{eq:hermitian.compatible.generators} one finds that
  \begin{align*}
    h\paraa{
    &\sigmah_a(m^ie_i),\nabla_{X_{\iota(a)}}(n^je_j)}
      +h\paraa{\nabla_{X_a}(m^ie_i),\sigmah_{\iota(a)}(n^je_j)} \\
    &=\sigma_a(m^i)X_a\paraa{h(e_i,e_j)}\sigma_{\iota(a)}(n^j)^\ast
      +\sigma_a(m^i)h\paraa{\sigmah_a(e_i),\tauh_{\iota(a)}(e_j)}X_{\iota(a)}(n^j)^\ast\\
    &\qquad + X_a(m^i)h\paraa{\tauh_a(e_i),\sigmah_{\iota(a)}(e_j)}\sigma_{\iota(a)}(n^j)^\ast.
  \end{align*}
  Now, since $h$ is $\st$-invariant and $\sigma_{\iota(a)}=\tau_a^\ast$, one obtains
  \begin{align*}
    h\paraa{
    &\sigmah_a(m^ie_i),\nabla_{X_{\iota(a)}}(n^je_j)}
      +h\paraa{\nabla_{X_a}(m^ie_i),\sigmah_{\iota(a)}(n^je_j)} \\
    &=\sigma_a(m^i)X_a\paraa{h(e_i,e_j)}\tau_a((n^j)^\ast)
      +\sigma_a(m^i)\sigma_a\paraa{h(e_i,e_j)}X_a((n^j)^\ast)\\
    &\qquad +X_a(m^i)\tau_a(h(e_i,e_j)(n^j)^\ast)\\
    &= \sigma_a(m^i)X_a\paraa{h(e_i,e_j)(n^j)^\ast} + X_a(m^i)\tau_a(h(e_i,e_j)(n^j)^\ast)\\ 
    &=X_a\paraa{m^ih(e_i,e_j)(n^j)^\ast} = X_a\paraa{h(m,n)}
  \end{align*}
  using the fact that $X_a$ is a $(\sigma_a,\tau_a)$-derivation.
\end{proof}

\noindent
Now, do metric connections exist? In the following result, we
explicitly construct metric connections on free modules.

\begin{proposition}\label{prop:metric.connection.free.module}
  Let $\Sigma=(\A,\{X_a\}_{a\in I},\iota)$ be a
  $\sigmatau$-$\ast$-algebra and let
  $(\A^n,\{\sigmah_a,\tauh_a\}_{a\in I})$ be a free $\Sigma$-module
  with basis $e_1,\ldots,e_n$. Furthermore, let $h$ be a
  $\sigmatau$-invariant hermitian form on $\A^n$ and assume that for
  each $a\in I$, there exist $h_{\sigma_a}^{ij}\in\A$ such that
  \begin{align}\label{eq:hsigma.invertible.cond}
    h_{\sigma_a}^{ij}h\paraa{e_j,\sigmah_a(e_k)}
    =\delta_k^i\mid .
  \end{align}
  Given arbitrary $\gamma_{a.ij}\in\A$, such that
  $\gamma_{a,ij}^\ast=\gamma_{\iota(a),ji}$ for $a\in I$ and
  $i,j=1,\ldots,n$,
  \begin{align}\label{eq:metric.connection.def}
    \nabla_{X_a}e_i = \parab{\tfrac{1}{2}X_a(h_{ij})+i\gamma_{a,ij}}h_{\sigma_{\iota(a)}}^{jk}e_k ,
  \end{align}
  with $h_{ij}=h(e_i,e_j)$, defines a left $\sigmatau$-connection that
  is compatible with $h$.
\end{proposition}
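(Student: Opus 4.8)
The plan is to establish the two defining requirements separately: that \eqref{eq:metric.connection.def} is a left $\sigmatau$-connection, and that it is compatible with $h$. The first requirement is essentially immediate. Setting $\Gamma_{ai}^{k} = \paraa{\tfrac12 X_a(h_{il}) + i\gamma_{a,il}}h_{\sigma_{\iota(a)}}^{lk}\in\A$, the formula \eqref{eq:metric.connection.def} reads $\nabla_{X_a}(e_i) = \Gamma_{ai}^{k}e_k$, so Proposition~\ref{sigma tau conn arb} guarantees that this data extends to a left $\sigmatau$-connection on $(\A^n,\{(\sigmah_a,\tauh_a)\}_{a\in I})$.

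For compatibility I would invoke Proposition~\ref{prop:metric.cond.on.generators} and verify \eqref{eq:metric.compatibility} only on the basis, i.e.\ establish
\[
X_a\paraa{h(e_i,e_j)} = h\paraa{\sigmah_a(e_i),\nabla_{X_{\iota(a)}}e_j} + h\paraa{\nabla_{X_a}e_i,\sigmah_{\iota(a)}(e_j)}
\]
for all $a\in I$ and $i,j=1,\dots,n$, recalling $h_{ij}=h(e_i,e_j)$. The second summand is the easier one: expanding $\nabla_{X_a}e_i = \paraa{\tfrac12 X_a(h_{il})+i\gamma_{a,il}}h_{\sigma_{\iota(a)}}^{lk}e_k$, pulling the coefficient out of the first slot of $h$ by left $\A$-linearity, and then collapsing $h_{\sigma_{\iota(a)}}^{lk}h\paraa{e_k,\sigmah_{\iota(a)}(e_j)}=\delta^l_j\mid$ via the inverse condition \eqref{eq:hsigma.invertible.cond} applied with $a$ replaced by $\iota(a)$ (using that $\iota$ is an involution), I expect to obtain exactly $\tfrac12 X_a(h_{ij})+i\gamma_{a,ij}$.

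The first summand is where the real work lies, and I anticipate this to be the main obstacle. Here I would first push the coefficient of $\nabla_{X_{\iota(a)}}e_j$ out of the \emph{second} slot of $h$, using the identity $h(m_1,fm_2)=h(m_1,m_2)f^\ast$, which follows from left linearity together with $h(m_1,m_2)^\ast=h(m_2,m_1)$. This introduces several conjugations that must be resolved simultaneously from the $\sigmatau$-$\ast$-algebra hypotheses: that $X_a^\ast=X_{\iota(a)}$ (so $[X_{\iota(a)}(h_{jl})]^\ast=X_a(h_{lj})$), that $h_{jl}^\ast=h(e_j,e_l)^\ast=h_{lj}$, and the reality condition $\gamma_{\iota(a),jl}^\ast=\gamma_{a,lj}$. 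The crucial point is that conjugation sends $i\gamma$ to $-i\gamma$, producing a sign flip relative to the second summand. To finish I would apply the $\ast$-conjugate of the inverse condition, namely $h\paraa{\sigmah_a(e_i),e_k}(h_{\sigma_a}^{lk})^\ast=\delta^l_i\mid$, obtained by starring \eqref{eq:hsigma.invertible.cond}, which collapses the remaining contraction and yields $\tfrac12 X_a(h_{ij})-i\gamma_{a,ij}$. Adding the two summands, the $\pm i\gamma_{a,ij}$ terms cancel while the two halves combine, giving precisely $X_a\paraa{h(e_i,e_j)}$, as required. The factor $\tfrac12$ and the anti-hermitian placement of $i\gamma_{a,ij}$ in \eqref{eq:metric.connection.def} are exactly what make this cancellation and recombination work.
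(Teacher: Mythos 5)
Your proposal is correct and follows essentially the same route as the paper's own proof: existence of the connection via Proposition~\ref{sigma tau conn arb}, reduction of compatibility to the basis via Proposition~\ref{prop:metric.cond.on.generators}, and then the two-term computation in which the second slot is evacuated using $h(m_1,fm_2)=h(m_1,m_2)f^\ast$, the conjugate of \eqref{eq:hsigma.invertible.cond}, $X_{\iota(a)}^\ast=X_a$, $h_{ij}^\ast=h_{ji}$ and $\gamma_{\iota(a),jl}^\ast=\gamma_{a,lj}$, so that the $\pm i\gamma_{a,ij}$ terms cancel and the two halves of $X_a(h_{ij})$ combine. No gaps.
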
 

\begin{proof}
  Since the module is free, any choice of $\nabla_{X_a}e_i$ defines a
  left $\sigmatau$-connection by setting (cf. Proposition~\ref{sigma
    tau conn arb})
  \begin{align*}
    \nabla_{X_a}(m^ie_i) = \sigma_a(m^i)\nabla_{X_a}e_i + X_a(m^i)\tauh_a(e_i).
  \end{align*}
  Moreover, Proposition~\ref{prop:metric.cond.on.generators} implies
  that it is sufficient to prove the metric condition
  \eqref{eq:metric.compatibility} for the basis of the module; i.e.
  \begin{align*}
    X_a(h_{ij}) = h\paraa{\sigmah_a(e_i),\nabla_{X_a^\ast}e_j}
    + h\paraa{\nabla_{X_a}e_i,\sigmah_{\iota(a)}(e_j)}
  \end{align*}
  for $a\in I$ and $i,j=1,\ldots,n$. One computes
  \begin{align*}
    h\paraa{\nabla_{X_a}e_i,\sigmah_{\iota(a)}(e_j)}
    &= \parab{\tfrac{1}{2}X_a(h_{ik})+i\gamma_{a,ik}}h_{\sigma_{\iota(a)}}^{kl}
      h\paraa{e_l,\sigmah_{\iota(a)}(e_j))}
    =\tfrac{1}{2}X_a(h_{ij}) + i\gamma_{a,ij}
  \end{align*}
  by using \eqref{eq:hsigma.invertible.cond}. Analogously, one finds
  that
  \begin{align*}
    h\paraa{\sigmah_a(e_i),&\nabla_{X_a^\ast}e_j}
    = h\paraa{\sigmah_a(e_i),\nabla_{X_{\iota(a)}}e_j}\\
    &=h\paraa{\sigmah_a(e_i),e_l}
      \parab{\paraa{\tfrac{1}{2}X_{\iota(a)}(h_{jk})+i\gamma_{\iota(a),jk}}h_{\sigma_{\iota^2(a)}}^{kl}}^\ast\\
    &= h\paraa{\sigmah_a(e_i),e_l}(h_{\sigma_{a}}^{kl})^\ast
      \parab{\tfrac{1}{2}X_{\iota(a)}^\ast(h_{kj})-i\gamma_{\iota(a),jk}^\ast}
    =\tfrac{1}{2}X_a(h_{ij}) - i\gamma_{a,ij}
  \end{align*}
  by using (the conjugate of) \eqref{eq:hsigma.invertible.cond}
  together with the assumption $\gamma_{a,ij}^\ast = \gamma_{\iota(a),ji}$. It follows that 
  \begin{align*}
    h\paraa{\sigmah_a(e_i),\nabla_{X_a^\ast}e_j}
    + h\paraa{\nabla_{X_a}e_i,\sigmah_{\iota(a)}(e_j)}
    = X_a(h_{ij}),
  \end{align*}
  showing that the $\sigmatau$-connection is compatible with $h$.
\end{proof}

\begin{remark}
  Note that if $\sigmah_a(e_i)=e_i$ for $a\in I$ and $1\leq i\leq n$,
  which is e.g. the case for the $\Sigma$-module
  $(\A^n,\{\sigmah_a^0,\tauh_a^0)\}_{a\in I}$) defined in
  Example~\ref{ex:free.module.sigma.structure}, then condition
  \eqref{eq:hsigma.invertible.cond} is equivalent to the hermitian
  form $h$ being invertible. Thus, in this case,
  Proposition~\ref{prop:metric.connection.free.module} shows that
  metric $\st$-connections exist on free modules with invertible
  hermitian forms (one can for instance always choose $\gamma_{a,ij}=0)$.
\end{remark}

\noindent
In the next result we show that one can construct metric connections
on projective $\Sigma$-modules by using orthogonal projections.

\begin{proposition}\label{prop:metric.connection.projective.module}
  Let $\nablat$ be a left $\sigmatau$-connection on the free
  $\Sigma$-module
  $(\A^n, \{(\hat{\sigma}_{a}, \hat{\tau}_{a})\}_{a \in I})$ that is
  compatible with the $\sigmatau$-invariant hermitian form $h$, and let
  $p:\A^n\to\A^n$ be a projection that is orthogonal with respect to
  $h$. If $[\sigmah_a,p]=[\tauh_a,p]=0$ for all $a\in I$ then
  $h|_{p(\A^n)}$ is a $\sigmatau$-invariant hermitian form on
  $(p(\A^n), \{(p\circ\hat{\sigma}_{a}, p\circ\hat{\tau}_{a})\}_{a \in
    I})$, and $\nabla=p\circ\nablat$ is a left $\sigmatau$-connection
  on
  $(p(\A^n), \{(p\circ\hat{\sigma}_{a}, p\circ\hat{\tau}_{a})\}_{a \in
    I})$ that is compatible with $h|_{p(\A^n)}$. 
\end{proposition}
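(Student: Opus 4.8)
The plan is to prove three separate claims: that $h|_{p(\A^n)}$ is a hermitian form, that it is $\sigmatau$-invariant, and that $\nabla = p\circ\nablat$ is a metric connection compatible with it. First I would verify that $h|_{p(\A^n)}$ is a hermitian form on the restricted module; this is essentially immediate since $h$ already satisfies $\complex$-linearity, $\A$-linearity in the first slot, and the $\ast$-symmetry property, and these properties are inherited by the restriction to any submodule. The only point requiring the orthogonality of $p$ is to ensure the form interacts well with the module structure $p(\A^n)$, but for being a hermitian form on $p(\A^n)$ as an $\A$-module the restriction suffices.

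Next I would check $\sigmatau$-invariance of $h|_{p(\A^n)}$ with respect to the $\Sigma$-module structure $(p\circ\sigmah_a, p\circ\tauh_a)$. For $m_1, m_2 \in p(\A^n)$, I would compute $\sigma_a\paraa{h(m_1,m_2)}$ using the $\sigmatau$-invariance of the original $h$, obtaining $h\paraa{\sigmah_a(m_1),\tauh_{\iota(a)}(m_2)}$. The key step is to rewrite this using the orthogonality of $p$ and the commutation $[\sigmah_a,p]=[\tauh_a,p]=0$: since $m_1 = p(m_1)$, we have $\sigmah_a(m_1) = \sigmah_a(p(m_1)) = p(\sigmah_a(m_1)) = (p\circ\sigmah_a)(m_1)$, and similarly for $\tauh$. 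Thus the $\sigma$ and $\tau$ maps on the submodule coincide with $p\circ\sigmah_a$ and $p\circ\tauh_a$ on elements already in $p(\A^n)$, giving invariance directly.

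The main work, and the hardest part, is verifying the metric compatibility condition \eqref{eq:metric.compatibility} for $\nabla = p\circ\nablat$ against $h|_{p(\A^n)}$. By Proposition~\ref{prop:metric.cond.on.generators} it suffices to check this on a generating set of $p(\A^n)$, but I would instead argue directly for $m_1,m_2\in p(\A^n)$. Starting from $X_a\paraa{h(m_1,m_2)}$ and using that $\nablat$ is compatible with $h$, I would obtain $h\paraa{\sigmah_a(m_1),\nablat_{X_{\iota(a)}}m_2} + h\paraa{\nablat_{X_a}m_1,\sigmah_{\iota(a)}(m_2)}$. The crux is to replace $\nablat$ by $\nabla = p\circ\nablat$ inside $h$ without changing the value: here I would invoke that $p$ is orthogonal with respect to $h$, meaning $h(p(x),y) = h(x,p(y))$ for appropriate $x,y$ (the precise form of orthogonality). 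Since $m_1 = p(m_1)$, one moves the projection across the hermitian form so that $h\paraa{\sigmah_a(m_1),\nablat_{X_{\iota(a)}}m_2} = h\paraa{p\sigmah_a(m_1),\nablat_{X_{\iota(a)}}m_2} = h\paraa{\sigmah_a(m_1),p\nablat_{X_{\iota(a)}}m_2} = h\paraa{\sigmah_a(m_1),\nabla_{X_{\iota(a)}}m_2}$, and symmetrically for the other term. Combining with the commutation relations to identify $\sigmah_a$ on $p(\A^n)$ with $p\circ\sigmah_a$ then yields exactly the compatibility condition for $\nabla$. The main obstacle will be pinning down the exact definition of $h$-orthogonality of $p$ and applying it carefully, since whether the projection can be moved from one argument of $h$ to the other is precisely the content of orthogonality, and one must ensure the $\ast$-symmetry of $h$ is used consistently when shifting $p$ across the conjugate-linear second slot.
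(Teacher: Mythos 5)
Your proposal is correct and follows essentially the same route as the paper: the invariance of $h|_{p(\A^n)}$ is obtained from $[\sigmah_a,p]=[\tauh_a,p]=0$ together with $m=p(m)$ for $m\in p(\A^n)$, and the metric compatibility is obtained by using the orthogonality $h(p(x),y)=h(x,p(y))$ to shift the projection onto the other slot, where it is absorbed via $p^2=p$ and the commutation relations. The paper runs the computation in the opposite direction (starting from the candidate expression for $\nabla$ and reducing to $X_a(h(m_1,m_2))$), but this is an immaterial difference, and your reading of the orthogonality condition is the one the paper uses.
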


\begin{proof}
  First of all, it follows from Proposition~\ref{con on proj mod} that
  $\nabla$ is a $\sigmatau$-connection on
  $(p(\A^n), \{(p\circ\hat{\sigma}_{a}, p\circ\hat{\tau}_{a})\}_{a\in
    I})$. Moreover, since $[\sigmah_a,p]=[\tauh_a,p]=0$ it follows
  that
  \begin{align*}
    h\paraa{(p\circ\sigmah_a)(m_1),(p\circ\tauh_{\iota(a)})(m_2)}
    &= h\paraa{(\sigmah_a\circ p)(m_1),(\tauh_{\iota(a)}\circ p)(m_2)}\\
    &= h\paraa{\sigmah_a(m_1),\tauh_{\iota(a)}(m_2)}
      =\sigma_a\paraa{h(m_1,m_2)}
  \end{align*}
  for $m_1,m_2\in p(\A^n)$, since $h$ is $\sigmatau$-invariant. Hence, $h|_{p(\A^n)}$ is
  $\sigmatau$-invariant on
  $(p(\A^n), \{(p\circ\hat{\sigma}_{a}, p\circ\hat{\tau}_{a})\}_{a \in
    I})$. Next, let us show that $\nabla$ is compatible with
  $h|_{p(\A^n)}$. Let $m_1,m_2\in p(\A^n)$ and compute
  \begin{align*}
    h\paraa{(&p\circ\sigmah_a)(m_1),\nabla_{X_a^\ast}m_2}
               +h\paraa{\nabla_{X_a}m_1,(p\circ\sigmah_{\iota(a)})(m_2)}\\
             &= h\paraa{(p\circ\sigmah_a)(m_1),p(\nablat_{X_a^\ast}m_2)}
               +h\paraa{p(\nablat_{X_a}m_1),(p\circ\sigmah_{\iota(a)})(m_2)}\\
             &=h\paraa{(p^2\circ\sigmah_a)(m_1),\nablat_{X_a^\ast}m_2}
               +h\paraa{\nablat_{X_a}m_1,(p^2\circ\sigmah_{\iota(a)})(m_2)}\\
             &=h\paraa{\sigmah_a(m_1),\nablat_{X_a^\ast}m_2}
               +h\paraa{\nablat_{X_a}m_1,\sigmah_{\iota(a)}(m_2)}
               = X_a\paraa{h(m_1,m_2)}
  \end{align*}
  using that $\nablat$ is compatible with $h$ and
  $[\sigmah_a,p]=0$. We conclude that $\nabla$ is compatible with
  $h|_{p(\A^n)}$.
\end{proof}

\noindent
Combining this result with Proposition~\ref{proj mod iso}, which in
particular gives a projection $p$ such that
$[\sigmah_a,p]=[\tauh_a,p]=0$, one can construct metric
$\st$-connections on projective modules from metric $\st$-connections
on free modules as soon as the projection is orthogonal.

\section{Torsion and curvature}

\noindent
In differential geometry, the fact that the set of derivations (or,
correspondingly, the vector fields) is a Lie algebra, with respect to
composition of derivations, is used to define both torsion and
curvature
\begin{align*}
  &T(X,Y) =\nabla_XY-\nabla_YX-[X,Y]\\ 
  &R(X,Y)Z = \nabla_X\nabla_YZ-\nabla_Y\nabla_XZ-\nabla_{[X,Y]}Z.
\end{align*}
So far, the $\st$-derivations of a $\st$-algebra are not assumed to
satisfy any kind of commutation relation. However, in many interesting
cases the $\st$-derivations satisfy twisted commutation relations;
e.g. in case of the quantum 3-sphere one finds that
\begin{align*}
  &\Xm\Xp - q^2\Xp\Xm = \Xz\\
  q^2&\Xz\Xm-q^{-2}\Xm\Xz=(1+q^2)\Xm\\
  q^2&\Xp\Xz-q^{-2}\Xz\Xp=(1+q^2)\Xp.
\end{align*}
In the general case, we introduce the following notion of a $\st$-Lie algebra.

\begin{definition}
  Let $\Sigma = (\A,\{X_a\}_{a\in I})$ be a $\st$-algebra. Given a map
  \begin{equation*}
    R:\TSigma\otimesC\TSigma\to\TSigma\otimesC\TSigma 
  \end{equation*}
  we say that $(\TSigma,R)$ is a \emph{$\st$-Lie algebra} if
  \begin{enumerate}
  \item $R^2=\id_{\TSigma\otimesC\TSigma}$,
  \item $m\paraa{X_a\otimesC X_b - R(X_a\otimesC X_b)} \in\TSigma$,
  \end{enumerate}
  for all $a,b\in I$, where
  $m:\TSigma\otimesC\TSigma\to\operatorname{End_\complex}(\A)$ denotes the
  composition map given by $m(X\otimesC Y)=X\circ Y$ for $X,Y\in\TSigma$.
\end{definition}

\noindent
This definition is related to the definition of a quantum/braided Lie-algebra
(see e.g. \cite{b:quantum.lie.algebras,m:quantum.braided.lie}); however, the ``$R$-matrix'' is not
necessarily assumed to satisfy a Yang-Baxter equation nor any braided version of the Jacobi identity.

Given a $\st$-Lie algebra $(\TSigma,R)$ one introduces components
of $R$ via
\begin{align*}
  R(X_a\otimesC X_b) = R_{ab}^{pq}X_p\otimesC X_q
\end{align*}
as well as structure constants $C_{ab}^p$
\begin{align*}
  m\paraa{X_a\otimesC X_b - R(X_a\otimesC X_b)} = C_{ab}^pX_p
\end{align*}
and the bilinear bracket
$\comR{\cdot,\cdot}:\TSigma\otimesC\TSigma\to\TSigma$
\begin{align*}
  \comR{X,Y} = m\paraa{X\otimesC Y-R(X\otimesC Y)}
\end{align*}
giving
\begin{align}\label{eq:comR.def}
  \comR{X_a,X_b} = X_a\circ X_b - R_{ab}^{pq}X_p\circ X_q = C_{ab}^pX_p.
\end{align}
The assumption that $R^2=\id_{\TSigma\otimesC\TSigma}$ can the be
expressed as
\begin{align*}
  \paraa{R_{ab}^{pq}R_{pq}^{rs}-\delta_a^r\delta_b^s}X_r\otimesC X_s= 0 
\end{align*}
and it follows that the bracket exhibits an $R$-antisymmetry
\begin{align*}
  R_{ab}^{pq}\comR{X_p,X_q}
  &= R_{ab}^{pq}X_p\circ X_q-R_{ab}^{pq}R_{pq}^{rs}X_r\circ X_s\\
  &=R_{ab}^{pq}X_p\circ X_q - X_a\circ X_b = -\comR{X_a,X_b},
\end{align*}
which may also be expressed in terms of the structure constants as
\begin{align*}
  R_{ab}^{pq}C_{pq}^rX_r=-C_{ab}^rX_r.
\end{align*}
Note that in the following we shall in most cases simply say that
$\TSigma$ is a $\sigmatau$-Lie algebra and tacitly assume a choice of $R$.
Let us now proceed to define curvature and torsion.

\begin{definition}\label{def:curvature}
  Let $\Sigma=\AstX$ be a $\sigmatau$-algebra, such that $(\TSigma,R)$ is a
  $\sigmatau$-Lie algebra, and let
  $(M,\{\sigmah_a,\tauh_a\}_{a\in I})$ be a $\Sigma$-module. Given a
  $\sigmatau$-connection $\nabla$, the curvature of $\nabla$ is defined as
  \begin{align}\label{eq:def.curvature}
    \Curv(X,Y)n = m_{\nabla}\paraa{X\otimesC Y-R(X\otimesC Y)}n-\nabla_{\comR{X,Y}}n
  \end{align}
  for $n\in M$ and $X,Y\in\TSigma$, where
  $m_{\nabla}(X\otimesC Y)=\nabla_X\circ\nabla_Y$.
\end{definition}

\noindent
For a left $\sigmatau$-connection $\nabla$ on a $\Sigma$-module $M$ and
for a set of generators $e_1,\ldots,e_n$, one writes
$\nabla_{X_a}e_i=\Gamma_{ai}^je_j$ and finds that
\begin{align*}
  \Curv(X_a,X_b)e_i
  = \paraa{\sigma_a(\Gamma_{bi}^j)&\Gamma_{aj}^k-R_{ab}^{pq}\sigma_p(\Gamma_{qi}^j)\Gamma_{pj}^k
  -C_{ab}^{c}\Gamma_{ci}^k}e_k\\
  &+X_a(\Gamma_{bi}^k)\tauh_a(e_k)-R_{ab}^{pq}X_p(\Gamma_{qi}^k)\tauh_p(e_k).
\end{align*}

\noindent
In order to define torsion, one needs a way to identify
$\st$-derivations with elements of the module. Such a map is sometimes
called an anchor map in differential geometry. Note that in the
classical case, where the module is the module of vector fields, the
anchor map is simply the isomorphism between derivations and vector
fields. In general, we make the following definition, in analogy with
the framework of real calculi developed in
\cite{aw:cgb.sphere,aw:curvature.three.sphere,atn:minimal.embeddings}.

\begin{definition}
  Let $\Sigma=\AstX$ be a $\sigmatau$-algebra and let
  $(M,\{\sigmah_a,\tauh_a\}_{a\in I})$ be a $\Sigma$-module. An
  \emph{anchor map} is a $\complex$-linear map $\varphi:\TSigma\to M$.
\end{definition}

\noindent
Given the choice of an anchor map, it is straightforward to define
torsion.

\begin{definition}\label{def:torsion}
  Let $\Sigma=\AstX$ be a $\sigmatau$-algebra, such that $(\TSigma,R)$ is a
  $\sigmatau$-Lie algebra, and let
  $(M,\{\sigmah_a,\tauh_a\}_{a\in I})$ be a $\Sigma$-module. Given a
  $\sigmatau$-connection $\nabla$ and an anchor map
  $\varphi:\TSigma\to M$, the \emph{torsion of $\nabla$ with respect to $\varphi$} is defined as
  \begin{align}\label{eq:def.torsion}
    T(X,Y) = m_{\varphi}\paraa{X\otimesC Y-R(X\otimesC Y)}-\varphi\paraa{\comR{X,Y}}
  \end{align}
  for $X,Y\in\TSigma$ with
  $m_{\varphi}(X\otimesC Y)=\nabla_{X}\varphi(Y)$. The
  connection is called \emph{torsion free} (with respect to $\varphi$)
  if $T(X,Y)=0$ for all $X,Y\in\TSigma$.
\end{definition}

\noindent Let us illustrate the above concepts with a simple
example.
\begin{example}
Let $\A=\complex[x,y]$ be the (commutative)
polynomial algebra in two variables. For $q\in\reals$ (with $q\neq 1$)
and $f\in\complex[x,y]$, let us introduce the Jackson derivatives
\begin{align*}
  X_1(f)(x,y) = \frac{f(qx,y)-f(x,y)}{(q-1)x}\qand
  X_2(f)(x,y) = \frac{f(x,qy)-f(x,y)}{(q-1)y}
\end{align*}
giving
\begin{align*}
  X_1(x^ny^m) = [n]_qx^{n-1}y^m\qand
  X_2(x^ny^m) = [m]_qx^ny^{m-1}
\end{align*}
where $[n]_q=(q^n-1)/(q-1)=1+q+q^2+\cdots+q^{n-1}$. Furthermore, for
$f,g\in\complex[x,y]$ it is easy to check that
\begin{align*}
  & X_1(fg) = \sigma_1(f)X_1(g) + X_1(f)g\\
  & X_2(fg) = \sigma_2(f)X_2(g) + X_2(f)g
\end{align*}
where $\sigma_1(f)(x,y)=f(qx,y)$ and $\sigma_2(f)(x,y)=f(x,qy)$,
as well as $[X_1,X_2] = 0$. Hence, 
\begin{align*}
  \Sigma = (\complex[x,y],\{X_1,X_2\})
\end{align*}
is a $\st$-algebra (with $\tau_1=\tau_2=\id_\A$). Moreover, it is
clear that $(\TSigma,R)$ is a $\st$-Lie algebra with
\begin{align*}
  &R(X_a\otimesC X_b) = X_b\otimesC X_a\\
  &m\paraa{X_a\otimesC X_b-R(X_a\otimesC X_b)} = 0.
\end{align*}
for $a,b\in\{1,2\}$. Next, let $\{e_1,e_2\}$ be a basis of the free module $M=\A^2$. By
setting
\begin{align*}
  \sigmah_1(m^ae_a) = \sigma_1(m^a)e_a\qand
  \sigmah_2(m^ae_a) = \sigma_2(m^a)e_a
\end{align*}
it is easy to see that $(M,\{(\sigmah_1,\id_M),(\sigmah_2,\id_M)\})$
is a free $\Sigma$-module. A $\st$-connection on $M$ is given by a
choice of $\Gamma_{ab}^c\in\A$ via
\begin{align*}
  \nabla_{X_a}e_b = \Gamma_{ab}^ce_c
\end{align*}
and extending it to $M$ as a $\st$-connection (cf. Proposition~\ref{sigma tau conn arb})
\begin{align*}
  \nabla_{X_a}(m^be_b) = \sigma_a(m^b)\nabla_{X_a}e_b + X_a(m^b)e_b.
\end{align*}
By defining an anchor map $\varphi:\TSigma\to M$ as
$\varphi(X_a)=e_a$, torsion free connections (with respect to $\varphi$) satisfy
\begin{align*}
  \nabla_{X_a}e_b - \nabla_{X_b}e_a = 0
\end{align*}
which is equivalent to $\Gamma_{ab}^c=\Gamma_{ba}^c$ for
$a,b,c\in\{1,2\}$. Moreover, since 
\begin{align*}
  [X_1,X_2] = [X_1,\sigma_2]=[X_2,\sigma_1]=[\sigma_1,\sigma_2] = 0,
\end{align*}
the curvature exhibits a twisted linearity property
\begin{align*}
  \Curv&(X_1,X_2)(fm)
  = \nabla_{X_1}\nabla_{X_2}(fm)-\nabla_{X_2}\nabla_{X_1}(fm)\\
       &= \nabla_{X_1}\paraa{\sigma_2(f)\nabla_{X_2}m+X_2(f)m}
         -\nabla_{X_2}\paraa{\sigma_1(f)\nabla_{X_1}m+X_1(f)m}\\
       &= \sigma_1(\sigma_2(f))\nabla_{X_1}\nabla_{X_2}m+X_1(\sigma_2(f))\nabla_{X_2}m
         +\sigma_1(X_2(f))\nabla_{X_1}m+X_1(X_2(f))m\\
       &-\sigma_2(\sigma_1(f))\nabla_{X_2}\nabla_{X_1}m-X_2(\sigma_1(f))\nabla_{X_1}m
         -\sigma_2(X_1(f))\nabla_{X_2}m - X_2(X_1(f))m\\
       &=\paraa{\sigma_1\circ\sigma_2}(f)\Curv(X_1,X_2)m.
\end{align*}
For instance, choosing
\begin{alignat*}{2}
  &\nabla_{X_1}e_1 = y^me_2 &\qquad &\nabla_{X_2}e_2 = x^ne_1\\
  &\nabla_{X_1}e_2 = 0 &\qquad &\nabla_{X_2}e_1 = 0
\end{alignat*}
one obtains a torsion free connection, and the curvature can be
computed as
\begin{align*}
  \Curv(X_1,X_2)e_1
  &= \nabla_{X_1}\nabla_{X_2}e_1-\nabla_{X_2}\nabla_{X_1}e_1 \\
  &= -\nabla_{X_2}\paraa{y^me_2}
    = -\sigma_2(y^m)\nabla_{X_2}e_2 - X_2(y^m)e_2\\
  &= -q^mx^ny^me_1 - [m]_qy^{m-1}e_2\\
  \Curv(X_1,X_2)e_2
  &= \nabla_{X_1}\nabla_{X_2}e_2 -\nabla_{X_2}\nabla_{X_1}e_2\\
  &= \nabla_{X_1}\paraa{x^ne_1} = \sigma_1(x^n)\nabla_{X_1}e_1+X_1(x^n)e_1\\
  &= q^nx^ny^me_2 + [n]_qx^{n-1}e_1
\end{align*}
which can be extended to all of $M$ by using the twisted linearity
property of the curvature
\begin{align*}
  \Curv(X_1,X_2)(fm)=\paraa{\sigma_1\circ\sigma_2}(f)\Curv(X_1,X_2)m,
\end{align*}
giving, for instance,
\begin{align*}
  \Curv(X_1,X_2)(xye_1) &= \sigma_1(\sigma_2(xy))\Curv(X_1,X_2)e_1 = (q^2xy)\Curv(X_1,X_2)e_1\\
  &= -q^{m+2}x^{n+1}y^{m+1}e_1-q^2[m]_qxy^me_2.
\end{align*}
\end{example}

\noindent
In the general, both the structure of the $\st$-Lie algebra, as
well as the choice of an anchor map, affect the existence of torsion
free connections.  However, let us prove a result which gives a way of
constructing torsion free connections on free modules.

\begin{proposition}\label{prop:torsion.free.sufficient}
  Let $\Sigma=\AstX$ be a $\sigmatau$-algebra and assume that
  $(\TSigma,R)$ is a $\sigmatau$-Lie algebra. Moreover, let
  $(M,\{\sigmah_a,\tauh_a\}_{a\in I})$ be a left $\Sigma$-module and
  let $\nabla$ be a left $\sigmatau$-connection. If there exist
  $\gamma_{ab}^c\in\A$ for $a,b,c\in I$ such that
  $R_{ab}^{pq}\gamma_{pq}^c=\gamma_{ab}^c$
  and
  \begin{align}
    \nabla_{X_a}\varphi(X_b) = \paraa{\tfrac{1}{2}C_{ab}^c + \gamma_{ab}^c}\varphi(X_c)
  \end{align}
  for $a,b\in I$, then $\nabla$ is torsion free with respect to $\varphi$.
\end{proposition}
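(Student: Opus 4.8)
The plan is to verify that the torsion vanishes directly from Definition~\ref{def:torsion}. Since $m_\varphi$ and $\varphi\circ\comR{\cdot,\cdot}$ are both $\complex$-bilinear and $\TSigma$ is spanned by $\{X_a\}_{a\in I}$, it suffices to show $T(X_a,X_b)=0$ for all $a,b\in I$. First I would expand \eqref{eq:def.torsion} using the components $R(X_a\otimesC X_b)=R_{ab}^{pq}X_p\otimesC X_q$ and $\comR{X_a,X_b}=C_{ab}^pX_p$, together with $m_\varphi(X\otimesC Y)=\nabla_X\varphi(Y)$, to obtain
\begin{align*}
  T(X_a,X_b) = \nabla_{X_a}\varphi(X_b) - R_{ab}^{pq}\nabla_{X_p}\varphi(X_q) - C_{ab}^c\varphi(X_c).
\end{align*}

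Next I would substitute the assumed form $\nabla_{X_a}\varphi(X_b)=\paraa{\tfrac12 C_{ab}^c+\gamma_{ab}^c}\varphi(X_c)$ into the first two terms. The two ingredients that drive the cancellation are identities obtained by applying the $\complex$-linear map $\varphi$ to vector relations in $\TSigma$: the $R$-antisymmetry of the bracket derived earlier in the text, $R_{ab}^{pq}C_{pq}^rX_r=-C_{ab}^rX_r$, transports to $R_{ab}^{pq}C_{pq}^c\varphi(X_c)=-C_{ab}^c\varphi(X_c)$, while the hypothesis $R_{ab}^{pq}\gamma_{pq}^c=\gamma_{ab}^c$ gives $R_{ab}^{pq}\gamma_{pq}^c\varphi(X_c)=\gamma_{ab}^c\varphi(X_c)$. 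Here I would remark that the components $R_{ab}^{pq}$ and $C_{pq}^c$ are complex scalars, so they commute freely with the module multiplication by $\A$, and the iterated sums over $p,q,c$ may be rearranged without difficulty. Combining these, the middle term $R_{ab}^{pq}\nabla_{X_p}\varphi(X_q)$ reduces to $\paraa{-\tfrac12 C_{ab}^c+\gamma_{ab}^c}\varphi(X_c)$, whence
\begin{align*}
  T(X_a,X_b)
  = \parab{\tfrac12 C_{ab}^c+\gamma_{ab}^c - \paraa{-\tfrac12 C_{ab}^c+\gamma_{ab}^c} - C_{ab}^c}\varphi(X_c) = 0,
\end{align*}
the $\gamma$-contributions cancelling and the $C$-contributions combining to exactly $C_{ab}^c\varphi(X_c)$, which is then annihilated by the last term.

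The computation is essentially bookkeeping, so I do not anticipate a genuine obstacle; the one point that needs care is the passage from the $R$-antisymmetry identity in $\TSigma$ to a statement about the images $\varphi(X_c)$. Since the generators $X_a$ need not be linearly independent, one cannot legitimately read off $R_{ab}^{pq}C_{pq}^c=-C_{ab}^c$ as an equality of scalar coefficients — but one does not need to. The relation $R_{ab}^{pq}C_{pq}^rX_r=-C_{ab}^rX_r$ already holds as an equation in $\TSigma$, and applying the linear map $\varphi$ carries it directly to the required relation among the $\varphi(X_c)$. This is precisely why the hypothesis $R_{ab}^{pq}\gamma_{pq}^c=\gamma_{ab}^c$ is imposed as a condition on algebra elements rather than derived, and why the coefficient $\tfrac12$ in the defining relation for $\nabla_{X_a}\varphi(X_b)$ is exactly calibrated so that the symmetric part of the structure constants survives while the $R$-antisymmetric part is killed.
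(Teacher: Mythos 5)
Your proof is correct and follows essentially the same route as the paper: expand the torsion on the generators, substitute the assumed form of $\nabla_{X_a}\varphi(X_b)$, and cancel using the $R$-antisymmetry $R_{ab}^{pq}C_{pq}^c\varphi(X_c)=-C_{ab}^c\varphi(X_c)$ together with the hypothesis $R_{ab}^{pq}\gamma_{pq}^c=\gamma_{ab}^c$. Your closing remark—that the antisymmetry should be applied as an identity in $T\Sigma$ pushed forward by the linear map $\varphi$, rather than as an equality of scalar coefficients—is a correct and slightly more careful reading of the same step the paper performs implicitly.
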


\begin{proof}
  One easily checks that \eqref{eq:def.torsion} holds:
  \begin{align*}
    \nabla_{X_a}\varphi(X_b)
    &-R^{pq}_{ab}\nabla_{X_p}\varphi(X_q)
    -\varphi\paraa{\comR{X_a,X_b}}\\
    &=\paraa{\tfrac{1}{2}C_{ab}^c+\gamma_{ab}^c}\varphi(X_c)
      -R_{ab}^{pq}\paraa{\tfrac{1}{2}C_{pq}^c+\gamma_{pq}^c}\varphi(X_c)-C_{ab}^c\varphi(X_c)\\
    &= \paraa{\tfrac{1}{2}C_{ab}^c+\tfrac{1}{2}C_{ab}^c+\gamma_{ab}^c-\gamma_{ab}^c-C_{ab}^c}\varphi(X_c)=0
  \end{align*}
  by using that
  $R_{ab}^{pq}C_{ab}^c\varphi(X_c)=-C_{ab}^c\varphi(X_c)$ and
  $R_{ab}^{pq}\gamma_{pq}^c=\gamma_{ab}^c$.
\end{proof}

\noindent
In the particular case when the anchor map $\varphi$ is such that
$\{\varphi(X_a)\}_{a\in I}$ is a basis of the module $M$,
Proposition~\ref{prop:torsion.free.sufficient} implies that one may
define a torsion free connection by setting
\begin{align*}
  \nabla_{X_a}\varphi(X_b) = \paraa{\tfrac{1}{2}C_{ab}^c + \gamma_{ab}^c}\varphi(X_c)
\end{align*}
and extending $\nabla$ to $M$ as a $\sigmatau$-connection.  Moreover, note
that one can easily find $\gamma_{ab}^c$ such that
$R_{ab}^{pq}\gamma_{pq}^c=\gamma_{ab}^c$; namely, for any
$\tilde{\gamma}_{ab}^c\in\A$ one readily checks that
\begin{align*}
  \gamma_{ab}^c = \tilde{\gamma}_{ab}^c+R_{ab}^{pq}\tilde{\gamma}_{pq}^c
\end{align*}
satisfies $R_{ab}^{pq}\gamma_{pq}^c=\gamma_{ab}^c$ due to
$R_{ab}^{pq}R_{pq}^{cd}=\delta_a^c\delta_b^d$.

Finally, let us define the concept of a Levi-Civita $\st$-connection.

\begin{definition}
  Let $\Sigma=(\A,\{X_a\}_{a\in I},\iota)$ be a
  $\sigmatau$-$\ast$-algebra, such that $(\TSigma,R)$ is a
  $\sigmatau$-Lie algebra, and let
  $(M,\{\sigmah_a,\tauh_a\}_{a\in I})$ be a $\Sigma$-module. Moreover,
  let $\varphi:\TSigma\to M$ be an anchor map and let $h$ be a
  $\st$-invariant hermitian form on $M$. If $\nabla$ is a torsion free
  $\st$-connection with respect to $\varphi$, such that $\nabla$ is
  compatible with $h$, then $\nabla$ is called a \emph{Levi-Civita
    $\st$-connection}.
\end{definition}

\noindent
In general, for an arbitrary choice of $\st$-algebra, $\Sigma$-module,
hermitian form and anchor map, the existence of a Levi-Civita
$\st$-connection cannot be guaranteed, but let us illustrate in the
next section that such connections can be constructed by studying an
example over the matrix algebra.

\section{Levi-Civita $\st$-connections over matrix algebras}\label{sec:lc.matrix}

\noindent
In this section we will construct $\st$-algebras over matrix algebras
as an illustration of the concepts that have been introduced in the
previous sections. Let $\A=\MatNC$ denote the algebra of
complex $(N\times N)$-matrices and let $\{U_a\}_{a=1}^n$ denote a set
of invertible matrices such that $[U_a,U_b]=0$ for
$a,b=1,\ldots,n$. Defining $\sigma_a(A)=U_aAU_a^{-1}$  and $X_a:\MatNC\to\MatNC$ as
\begin{align}
  X_a(A) = A-\sigma_a(A) = A -U_aAU_a^{-1} 
\end{align}
for $A\in\MatNC$ and $a=1,\dots,n$, one easily checks that $X_a$ is a
$(\sigma_a,\idA)$-derivation (in fact, one can check that $X_a$ is a
$(\idA,\sigma_a)$-derivation as well). With this notation, we
introduce the $\st$-algebra
\begin{align*}
  \Sigma = (\MatNC,\{X_a\}_{a=1}^n)
\end{align*}
where $X_a$ satisfies
\begin{align*}
  X_a(AB) = \sigma_a(A)X_a(B) + X_a(A)B =U_aAU_a^{-1}X_a(B)+X_a(A)B
\end{align*}
for $A,B\in\MatNC$. The tangent space $\TSigma$ is generated by $\{X_a\}_{a=1}^n$, and since
$[U_a,U_b]=0$ it follows that $[X_a,X_b]=0$ for $a,b=1,\ldots,n$. Hence,
$(\TSigma,R)$ is a $\st$-Lie algebra with
\begin{equation*}
  R(X\otimesC Y)=Y\otimesC X
\end{equation*}
and
\begin{equation*}
  m\paraa{X\otimesC Y-R(X\otimesC Y)} = 0
\end{equation*}
for $X,Y\in\TSigma$. In the following, we will assume that
$\{X_a\}_{a=1}^n$ is a basis of $\TSigma$, and one finds that
\begin{equation*}
  R_{ab}^{pq} =\delta_a^q\delta_b^p\qand
  C_{ab}^p = 0
\end{equation*}
for $a,b,p,q=1,\ldots,n$.

For the $\st$-algebra constructed above, we shall
consider connections on two types of modules: the algebra $\MatNC$
itself, i.e. a free module of rank one, and $\complex^N$, which is a
projective module (that is not free). To be more specific, let us
recall how one may realize $\complex^N$ in terms of a projector. We
consider elements of $\complex^N$ as column vectors, and matrices act
in the standard way on such vectors from the left, giving a left
module structure on $\complex^N$. Now, for any $v_0\in\complex^N$ such
that $|v_0|=1$, one defines the matrix $p=v_0v_0^\dagger$, where
$v_0^\dagger$ denotes the hermitian conjugate of $v_0$. It is easy to
see that $p$ is a projection, i.e.
\begin{align*}
  &p^2 = v_0v_0^\dagger v_0v_0^\dagger = v_0|v_0|^2v_0^\dagger = v_0v_0^\dagger = p\\
  &p^\dagger = (v_0v_0^\dagger)^\dagger = v_0v_0^\dagger = p.
\end{align*}
since $v_0^\dagger v_0=|v_0|^2=1$. Hence, $M=\MatNC p$ is a (left)
projective module. This module is indeed isomorphic to $\complex^N$,
and the isomorphism can be realized by the module homomorphism $\phi:\complex^N\to\MatNC p$
given by
\begin{align}\label{eq:phi.isomorphism}
  \phi(v) = vv_0^\dagger
\end{align}
for $v\in\complex^N$, with inverse $\phi^{-1}(A)=Av_0$ for
$A\in\MatNC p$. By a slight abuse of notation, we will in the
following also denote by $p$ the map $p:\MatNC\to\MatNC$ given by
$p(A)=Ap$ for $A\in\MatNC$.

\subsection{Torsion free $\st$-connections on projective $\Sigma$-modules}

\noindent
First of all, note that $(\MatNC,\{\sigma_a,\idA\}_{a=1}^n)$ is a free
$\Sigma$-module. Moreover, for an arbitrary projection $p\in\MatNC$, consider
the projective module $\Mp=\MatNC p$; the important cases will be when
$p=\mid$ (the $N\times N$ identity matrix) and $p=v_0v_0^\dagger$ for
$v_0\in\complex^N$ such that $|v_0|=1$.  According to
Proposition~\ref{prop:TM}, one can equip $\Mp$ with the structure of a
$\Sigma$-module by defining
\begin{align*}
  \sigmah_a = p\circ\sigma_a\qand
  \tauh_a = p\circ\id_{\A},
\end{align*}
giving
\begin{align*}
  \sigmah_a(A) = \sigma_a(A)p=U_aAU_a^{-1}p\qquad
  \tauh_a(A) = Ap = A
\end{align*}
for $A\in \Mp$. Since $(\MatNC,\{\sigma_a,\idA\}_{a=1}^n)$ is a free
$\Sigma$-module of rank one with a basis given by the identity matrix
$\mid$, Proposition~\ref{sigma tau conn arb} implies that a
$\st$-connection may be defined by specifying, for $a=1,\ldots,n$
\begin{align*}
  \nablat_{X_a}\mid = \Gammat_a
\end{align*}
for arbitrary $\Gammat_a\in\MatNC$, giving
\begin{align*}
  \nablat_{X_a}A = \sigma_a(A)\nabla_{X_a}\mid + X_a(A)\mid = A-U_aAU_a^{-1}(\mid-\Gammat_a)
  =A-U_aAU_a^{-1}\Gamma_a
\end{align*}
for $A\in\MatNC$, where $\Gamma_a = \mid-\Gammat_a$. Moreover,
Proposition~\ref{con on proj mod} implies that $p\circ\nablat$ is a
$\st$-connection on
$(\Mp,\{\sigmah_a,\id_{\Mp}\}_{a=1}^n)$; i.e.
\begin{align}\label{eq:st.connection.proj.matrix}
  \nabla_{X_a}A = Ap-U_aAU_a^{-1}\Gamma_ap = A-U_aAU_a^{-1}\Gamma_ap
\end{align}
for $A\in\Mp$. In the case when $p=v_0v_0^\dagger$, let us use the
isomorphism $\phi$ (as defined in \eqref{eq:phi.isomorphism}) to write
down the connection explicitly acting an element $v\in\complex^N$
\begin{align*}
  \nabla_{X_a}v
  &= \phi^{-1}\paraa{\nabla_{X_a}\phi(v)}
    =\paraa{\nabla_{X_a}vv_0^\dagger}v_0\\
  &= vv_0^\dagger v_0-U_avv_0^\dagger U_a^{-1}\Gamma_av_0v_0^\dagger v_0
    = v- U_av(v_0^\dagger U_a^{-1}\Gamma_av_0)\\
  &= (\mid-\gamma_aU_a)v
\end{align*}
where $\gamma_a=v_0^\dagger U_a^{-1}\Gamma_av_0\in\complex$ for
$a=1,\ldots,n$.  Given the previously introduced $\st$-Lie algebra
structure on $\TSigma$ one easily computes the curvature of $\nabla$.

\begin{proposition}\label{prop:curvature.mat}
  The curvature of the $(\sigma, \tau)$-connection
  \begin{align*}
    \nabla_{X_a}A = A-U_aAU_a^{-1}\Gamma_ap
  \end{align*}
  on $(\Mp,\{\sigmah_a,\idA\})$ is given by
  \begin{equation}\label{eq:curvature.proj.connection}
    \Curv(X_{a}, X_{b})A =  U_{a}U_{b}A[U_{b}^{-1}\Gamma_{b}p, U_{a}^{-1}\Gamma_{a}p].	
  \end{equation}
  for $A\in\Mp$. Moreover, the curvature satisfies
  \begin{align*}
    \Curv(X_a,X_b)(BA) = \sigma_a\paraa{\sigma_b(B)}\Curv(X_a,X_b)A
  \end{align*}
  for all $A\in\Mp$ and $B\in\MatNC$, and if $p=v_0v_0^\dagger$ for
  $v_0\in\complex^N$ such that $|v_0|=1$, then $\Curv(X_a,X_b)A = 0$
  for $A\in\Mp$ and $a,b=1,\ldots,n$.
\end{proposition}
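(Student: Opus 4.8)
The plan is to compute the curvature directly from Definition~\ref{def:curvature}. Since $(\TSigma,R)$ here satisfies $R(X\otimesC Y)=Y\otimesC X$ and $C_{ab}^p=0$, the formula for curvature collapses to the ordinary commutator of covariant derivatives: $\Curv(X_a,X_b)A = \nabla_{X_a}\nabla_{X_b}A - \nabla_{X_b}\nabla_{X_a}A$, with no $\nabla_{[X_a,X_b]}$ correction term to track. This is the main simplification I would exploit at the outset.

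\textbf{The core computation.}
First I would apply the connection $\nabla_{X_b}A = A - U_bAU_b^{-1}\Gamma_b p$ from \eqref{eq:st.connection.proj.matrix}, then apply $\nabla_{X_a}$ to the result. The key point is to treat $\nabla_{X_a}$ as acting on an element of $\Mp$ and keep careful track of where the endomorphisms $\sigma_a(\cdot)=U_a(\cdot)U_a^{-1}$ land. Using $[U_a,U_b]=0$ throughout, the two compositions $\nabla_{X_a}\nabla_{X_b}A$ and $\nabla_{X_b}\nabla_{X_a}A$ should produce a pile of terms, most of which are symmetric in $a\leftrightarrow b$ and hence cancel in the difference. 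I expect the $A$ and first-order terms to cancel cleanly, leaving only the genuinely second-order ``mixed'' terms; reorganizing these using $U_aU_b=U_bU_a$ should collapse them into the commutator $[U_b^{-1}\Gamma_b p, U_a^{-1}\Gamma_a p]$ prefixed by $U_aU_b A$, giving \eqref{eq:curvature.proj.connection}. The bookkeeping of which factors commute past which is where an error is most likely to creep in, so that is the step I would carry out most carefully.

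\textbf{The twisted linearity and vanishing claims.}
For the twisted linearity $\Curv(X_a,X_b)(BA)=\sigma_a(\sigma_b(B))\Curv(X_a,X_b)A$, I would observe that $\Curv$ is $\complex$-bilinear in its module argument and that the $A$-appears-on-the-left structure of \eqref{eq:curvature.proj.connection} makes $B$ pull out to the far left; each application of a $\nabla_{X_{\bullet}}$ picks up one factor of $\sigma_\bullet$ acting on the left multiplier, so composing the two derivatives yields exactly $\sigma_a\circ\sigma_b$ applied to $B$. Alternatively, and more cleanly, this follows from the general twisted-linearity identity that holds whenever the structure constants vanish and $\sigma_a,\sigma_b$ commute with each other (as here, since they are all of the form $\operatorname{Ad}$ by commuting unitaries). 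Finally, for the vanishing when $p=v_0v_0^\dagger$, I would substitute this rank-one projection into \eqref{eq:curvature.proj.connection} and show the inner commutator dies: each term $U_b^{-1}\Gamma_b p\cdot U_a^{-1}\Gamma_a p$ contains the scalar $v_0^\dagger(\cdots)v_0$ sandwiched between two copies of $v_0 v_0^\dagger$, so both orderings reduce to the \emph{same} scalar multiple of $v_0 v_0^\dagger$ and their difference is zero.

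\textbf{Anticipated obstacle.}
The only real difficulty is the combinatorial bookkeeping in the core commutator computation — ensuring every cancellation is genuine and that the surviving terms assemble into precisely the stated commutator rather than its negative or a reordered variant. The commutativity $[U_a,U_b]=0$ is the essential tool, and I would invoke it explicitly at each rearrangement; I do not anticipate any conceptual obstruction beyond this.
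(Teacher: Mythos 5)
Your overall strategy coincides with the paper's proof: the curvature reduces to the plain commutator $\nabla_{X_a}\nabla_{X_b}-\nabla_{X_b}\nabla_{X_a}$ because $R$ is the flip and $C_{ab}^{p}=0$, the direct expansion using $[U_a,U_b]=0$ yields \eqref{eq:curvature.proj.connection}, and the twisted linearity follows by pulling $B$ out to the far left through both covariant derivatives, producing $U_aU_bBU_b^{-1}U_a^{-1}=\sigma_a\paraa{\sigma_b(B)}$. Those parts match the paper and are sound as a plan.

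However, your argument for the vanishing of the curvature when $p=v_0v_0^\dagger$ contains a genuine error. You claim the inner commutator $[U_b^{-1}\Gamma_b p, U_a^{-1}\Gamma_a p]$ itself dies because ``both orderings reduce to the same scalar multiple of $v_0v_0^\dagger$.'' This is false: writing $s_a = v_0^\dagger U_a^{-1}\Gamma_a v_0$, one has
\[
U_b^{-1}\Gamma_b p\, U_a^{-1}\Gamma_a p = s_a\, (U_b^{-1}\Gamma_b v_0)v_0^\dagger,
\]
a scalar multiple of $(U_b^{-1}\Gamma_b v_0)v_0^\dagger$ rather than of $v_0v_0^\dagger$, while the other ordering gives $s_b\,(U_a^{-1}\Gamma_a v_0)v_0^\dagger$; these rank-one matrices differ in general, so the commutator need not vanish (take $N=2$, $v_0=e_1$, $U_a^{-1}\Gamma_a=\mid$ and $U_b^{-1}\Gamma_b$ strictly lower triangular for a concrete nonzero example). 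The scalar-sandwich argument only works once a third copy of $p$ sits to the \emph{left} of the commutator, and this is precisely what the paper supplies: since $A\in\Mp$ satisfies $A=Ap$, one has $\Curv(X_a,X_b)A = U_aU_bAp\,[U_b^{-1}\Gamma_b p,U_a^{-1}\Gamma_a p]$, and
\[
p[A_1p,A_2p]=v_0\paraa{(v_0^\dagger A_1v_0)(v_0^\dagger A_2v_0)-(v_0^\dagger A_2v_0)(v_0^\dagger A_1v_0)}v_0^\dagger=0
\]
because the sandwiched quantities are commuting scalars. Your proof needs this extra step; as written, the vanishing claim does not follow.
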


\begin{proof}
  Following Definition~\ref{def:curvature} and using the
  $\st$-Lie algebra structure on $\TSigma$, one computes
  \begin{align*}
    \Curv(X_{a}, X_{b})A
    &= \nabla_{X_{a}}\nabla_{X_{b}}A - \nabla_{X_{b}}\nabla_{X_{a}}A\\
    &= \nabla_{X_{a}}(A - U_{b}AU_{b}^{-1}\Gamma_{b}p) - \nabla_{X_{b}}(A - U_{a}AU_{a}^{-1}\Gamma_{a}p)\\
    &= \nabla_{X_{a}}A - \nabla_{X_{a}}(U_{b}AU_{b}^{-1}\Gamma_{b}p) - \nabla_{X_{b}}(A) + \nabla_{X_{b}}(U_{a}AU_{a}^{-1}\Gamma_{a}p)\\
    &= A - U_{a}AU_{a}^{-1}\Gamma_{a}p - U_{b}AU_{b}^{-1}\Gamma_{b}p + U_{a}U_{b}AU_{b}^{-1}\Gamma_{b}pU_{a}^{-1}\Gamma_{a}p\\
    &\qquad- A + U_{b}AU_{b}^{-1}\Gamma_{b}p + U_{a}AU_{a}^{-1}\Gamma_{a}p - U_{b}U_{a}AU_{a}^{-1}\Gamma_{a}pU_{b}^{-1}\Gamma_{b}p\\
    &= U_{a}U_{b}A\left(U_{b}^{-1}\Gamma_{b}pU_{a}^{-1}\Gamma_{a}p - U_{a}^{-1}\Gamma_{a}pU_{b}^{-1}\Gamma_{b}p\right)\\
    &= U_{a}U_{b}A[U_{b}^{-1}\Gamma_{b}p, U_{a}^{-1}\Gamma_{a}p],
  \end{align*}
  using the fact that $[U_a,U_b]=0$.  Moreover, one finds that
  \begin{align*}
    \Curv&(X_a,X_b)(BA)
    = U_aU_bBA[U_{b}^{-1}\Gamma_{b}p, U_{a}^{-1}\Gamma_{a}p]\\
         &= (U_aU_bBU_b^{-1}U_a^{-1})U_aU_bA[U_{b}^{-1}\Gamma_{b}p, U_{a}^{-1}\Gamma_{a}p]
           = \sigma_a\paraa{\sigma_b(B)}\Curv(X_a,X_b)A
  \end{align*}
  for $B\in\MatNC$ and $A\in\Mp$. Assuming that $p=v_0v_0^\dagger$ one
  computes, for arbitrary $A_1,A_2\in\MatNC$
  \begin{align*}
    p[A_1p, A_2p]
    &=pA_1pA_2p-pA_2pA_1p\\
    &= v_0\paraa{(v_0^\dagger A_1v_0)(v_0^\dagger A_2 v_0)-(v_0^\dagger A_2v_0)(v_0^\dagger A_1 v_0)}v_0^\dagger=0
  \end{align*}
  since $v_0^\dagger A_iv_0\in\complex$ for $i=1,2$. Applying this to the curvature, one obtains
  \begin{align*}
    \Curv(X_a,X_b)A = U_aU_bA[U_{b}^{-1}\Gamma_{b}p, U_{a}^{-1}\Gamma_{a}p]
    =U_aU_bA\paraa{p[U_{b}^{-1}\Gamma_{b}p, U_{a}^{-1}\Gamma_{a}p]} = 0
  \end{align*}
  using that $A=Ap$ for $A\in\Mp$. 
\end{proof}

\noindent
Let us now construct a class of torsion free $\st$-connections on
$(\Mp,\{\sigmah_a,\idA\}_{a=1}^n)$. To this end, let
$\varphi:\MatNC\to\Mp$ be an arbitrary anchor map, and write
$\varphi(X_a)=E_ap\in\Mp$ for $E_a\in\MatNC$. With respect to the
$\st$-Lie algebra structure on $\TSigma$ given above, the
$\st$-connection defined in \eqref{eq:st.connection.proj.matrix} is
torsion free if
\begin{align*}
  0 &= \nabla_{X_a}\varphi(X_b)-\nabla_{X_b}\varphi(X_a)
  = E_bp-E_ap-U_aE_bpU_a^{-1}\Gamma_ap+U_bE_apU_b^{-1}\Gamma_bp\\
  &=\paraa{E_b-E_a-U_aE_bpU_a^{-1}\Gamma_a+U_bE_apU_b^{-1}\Gamma_b}p,
\end{align*}
for $a,b=1,\ldots,n$.  These equations may be solved in many different
ways, but let us for definiteness present a particular solution. To
this end, assume that $\{E_a\}_{a=1}^n$ is chosen such that
$[E_a,E_b]=[E_a,U_b]=0$ for $a,b=1,\ldots,n$ (one can for instance
choose $E_a=U_a$), and
set $\Gamma_a=\mid-E_a$, giving
\begin{align}
  \nabla_{X_a}A = A-U_aAU_a^{-1}(\mid-E_a)p.
\end{align}
If $p=\mid$ then one readily checks that this connection is indeed
torsion free with respect to the anchor map $\varphi$:
\begin{align*}
  \nabla_{X_a}\varphi(X_b)-\nabla_{X_b}\varphi(X_a)
  &= E_b-E_a-U_aE_bU_a^{-1}(\mid-E_a)+U_bE_aU_b^{-1}(\mid-E_b)\\
  &= E_b-E_a-E_b(\mid-E_a)+E_a(\mid-E_b)=0
\end{align*}
since $[E_a,U_b]=[E_a,E_b]=0$ for $a,b=1,\ldots,n$. Note that it
follows from Proposition~\ref{prop:curvature.mat} that this connection
has zero curvature.

In the case when $p=v_0v_0^\dagger$ let us assume that $v_0$ is a
common eigenvector of the commuting matrices $\{E_a,U_a\}_{a=1}^n$,
i.e
\begin{align*}
  E_av_0=\lambda_av_0\qand U_av_0=\mu_{a}v_0
\end{align*}
for $\lambda_a,\mu_a\in\complex$ and $a=1,\ldots,n$. It follows that
\begin{align*}
  U_aE_bpU_a^{-1}(\mid-E_a)p
  &=U_aE_bv_0v_0^\dagger U_a^{-1}(\mid-E_a)v_0v_0^\dagger
    =\mu_a\lambda_bv_0v_0^\dagger \mu_a^{-1}(1-\lambda_a)v_0v_0^\dagger\\
  &=\lambda_b(1-\lambda_a)(v_0v_0^\dagger)^2 = \lambda_b(1-\lambda_a)p
\end{align*}
giving
\begin{align*}
  \nabla_{X_a}\varphi(X_b)-\nabla_{X_b}\varphi(X_a) &=
  \paraa{E_b-E_a-U_aE_bpU_a^{-1}\Gamma_a+U_bE_apU_b^{-1}\Gamma_b}p\\
  &=\lambda_bp - \lambda_ap - \lambda_b(1-\lambda_a)p + \lambda_a(1-\lambda_b)p = 0.
\end{align*}
Hence, the $\st$-connection
\begin{align*}
  \nabla_{X_a}v = (\mid-\gamma_aU_a)v = \paraa{\mid-\mu_a^{-1}(1-\lambda_a)U_a}v
\end{align*}
is torsion free with respect to $\varphi(X_a)=\lambda_av_0$.

\subsection{$\st$-$\ast$-algebras}

\noindent
In this section, we shall assume that the matrices $\{U_a\}_{a=1}^n$
are unitary. The matrix algebra $\MatNC$ is a $\ast$-algebra with
respect to the hermitian transpose, and one finds that
\begin{align*}
  \sigma_a^\ast(A) = \sigma_a(A^\dagger)^\dagger = (U_aA^\dagger U_a^\dagger)^\dagger
  =U_aAU_a^\dagger = \sigma_a(A)
\end{align*}
and, consequently, that $X_a^\ast = X_a$. As noted previously, $X_a$
is a $(\sigma_a,\idA)$-derivation as well as a
$(\idA,\sigma_a)$-derivation, and we shall use this fact to construct
a $\st$-$\ast$-algebra. Let us introduce
\begin{align*}
  \Xt_k =
  \begin{cases}
    X_k&\text{ if }1\leq k\leq n\\
    X_{k-n}&\text{ if }n<k\leq 2n.
  \end{cases}
\end{align*}
such that $\Xt_k$ is a $(\sigmat_k,\taut_k)$-derivation with
\begin{align*}
  \sigmat_k=
  \begin{cases}
    \sigma_k&\text{ if }1\leq k\leq n\\
    \idA&\text{ if }n<k\leq 2n
  \end{cases}\qqand
  \taut_k=
  \begin{cases}
    \idA&\text{ if }1\leq k\leq n\\
    \sigma_{k-n}&\text{ if }n<k\leq 2n    
  \end{cases};
\end{align*}
for convenience we shall also write
\begin{align*}
  U_k =
  \begin{cases}
    U_k &\text{ if }1\leq k\leq n\\
    U_{k-n} &\text{ if }n<k\leq 2n
  \end{cases}
\end{align*}
such that $\Xt_k(A)=A-U_kAU_k^\dagger$ for $k=1,\ldots,2n$.  Defining
the involution $\iota$ as
\begin{align*}
  \iota(k) =
  \begin{cases}
    k+n&\text{ if }1\leq k\leq n\\
    k-n&\text{ if }n<k\leq 2n
  \end{cases}
\end{align*}
it is easy to verify that 
\begin{align*}
  \Sigmas = (\MatNC,\{\Xt_i\}_{i=1}^{2n},\iota)
\end{align*}
is a $\st$-$\ast$-algebra; i.e.
\begin{align*}
  &\sigmat_{\iota(k)}^\ast =
  \begin{cases}
    \sigmat_{k+n}=\idA &\text{ if }1\leq k\leq n\\
    \sigmat_{k-n}=\sigma_{k-n}&\text{ if }n<i\leq 2n    
  \end{cases}
  =\taut_k\\
  &\Xt_{\iota(k)} =
    \begin{cases}
      \Xt_{k+n}&\text{ if }1\leq k\leq n\\
      \Xt_{k-n}&\text{ if }n<k\leq 2n
    \end{cases}=
    \begin{cases}
    X_k=X_k^\ast &\text{ if }1\leq k\leq n\\
    X_{k-n}=X_{k-n}^\ast&\text{ if }n<k\leq 2n
  \end{cases}
  =\Xt_k^\ast.
\end{align*}
Note that even though formally the number of $\st$-derivations has
been doubled, the tangent space remains the same,
i.e. $\TSigma=\TSigma^\ast$, and $\TSigma^\ast$ can be given the same
structure of a $\st$-Lie algebra as $\TSigma$.

One can construct a free $\Sigma^\ast$-module
$(\MatNC,\{\sigmah_k,\tauh_k\}_{k=1}^{2n})$ by setting
\begin{equation*}
  \sigmah_k(A)=\sigmat_k(A)\qand
  \tauh_k(A)=\taut_k(A)
\end{equation*}
for $k=1,\ldots,2n$. A (left) $\st$-connection on
$(\MatNC,\{\sigmah_k,\tauh_k\}_{k=1}^{2n})$ has to satisfy
\begin{align*}
  \nabla_{\Xt_k}(BA) = \sigmat_k(B)\nabla_{\Xt_k}A + \Xt_k(B)\tauh_k(A)
\end{align*}
for $k=1,\ldots,2n$, which is equivalent to
\begin{align}
  &\nabla_{X_a}(BA) = \sigma_a(B)\nabla_{X_a}A + X_a(B)A\label{eq:nabla.left.BA}\\
  &\nabla_{X_a}(BA) = B\nabla_{X_a}A + X_a(B)\sigma_a(A)\label{eq:nabla.right.BA}
\end{align}
for $a=1,\ldots,n$. Hence, doubling the $\st$-derivations has the
effect that one requires the connection to satisfy two different product rules
in analogy with $X_a$ being both a $(\sigma_a,\idA)$-derivation and a
$(\idA,\sigma_a)$-derivation. Let us now show that under a mild
regularity assumption, there is a unique connection on the free
$\Sigma^\ast$-module $(\MatNC,\{\sigmah_a,\tauh_a\}_{a=1}^{2n})$.
\begin{definition}
  A $\st$-derivation $X$ on $\MatNC$ is called regular if there exists
  $A\in\MatNC$ such that $\det(X(A))\neq 0$. The $\st$-algebra
  $(\MatNC,\{\Xt_k\}_{k=1}^{2n})$ is called regular if $X_a$ is regular for
  $a=1,\ldots,n$.
\end{definition}

\begin{proposition}\label{prop:matrix.st.star.unique.connection}
  Assume that $(\MatNC,\{\Xt_k\}_{k=1}^{2n})$ is a regular
  $\st$-$\ast$-algebra. Then there exists a unique $\st$-connection on the free $\Sigma^\ast$-module
  $(\MatNC,\{\sigmah_a,\tauh_a\}_{a=1}^{2n})$, given by
  \begin{equation}\label{eq:matrix.unique.connection}
    \nabla_{\Xt_k}A = \Xt_k(A) = A - U_kAU_k^\dagger
  \end{equation}
  for $A\in\MatNC$ and $k=1,\ldots,2n$.
\end{proposition}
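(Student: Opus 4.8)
The plan is to separate existence from uniqueness and to exploit the feature, already recorded just before the statement, that a left $\st$-connection on the rank-one free $\Sigma^\ast$-module $(\MatNC,\{\sigmah_a,\tauh_a\}_{a=1}^{2n})$ must obey the two product rules \eqref{eq:nabla.left.BA} and \eqref{eq:nabla.right.BA} for each $a=1,\ldots,n$ \emph{simultaneously}. Indeed, $\Xt_a$ and $\Xt_{a+n}$ are literally the same operator $X_a$ on $\MatNC$, so the single operator $\nabla_{X_a}$ has to satisfy both the Leibniz rule coming from regarding $\Xt_a$ as a $(\sigma_a,\idA)$-derivation and the one coming from regarding $\Xt_{a+n}$ as an $(\idA,\sigma_a)$-derivation. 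Regularity will only be needed for the uniqueness half.

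For existence, I would set $\nabla_{\Xt_k}A=\Xt_k(A)=A-U_kAU_k^\dagger$ and verify the two rules directly. Since $X_a$ is a $(\sigma_a,\idA)$-derivation one has $X_a(BA)=\sigma_a(B)X_a(A)+X_a(B)A$, which is precisely \eqref{eq:nabla.left.BA} with $\nabla_{X_a}=X_a$; since $X_a$ is also an $(\idA,\sigma_a)$-derivation one has $X_a(BA)=BX_a(A)+X_a(B)\sigma_a(A)$, which is precisely \eqref{eq:nabla.right.BA}. Together with the obvious $\complex$-linearity of $\nabla$ in both arguments (as required by Definition~\ref{def:left.sigmatau.connection}), this shows that \eqref{eq:matrix.unique.connection} does define a $\st$-connection.

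For uniqueness, I would let $\nabla$ be an arbitrary such connection and put $\Gamma_a=\nabla_{X_a}\mid$. Evaluating \eqref{eq:nabla.left.BA} at $A=\mid$ gives $\nabla_{X_a}B=\sigma_a(B)\Gamma_a+X_a(B)$, while \eqref{eq:nabla.right.BA} at $A=\mid$ gives $\nabla_{X_a}B=B\Gamma_a+X_a(B)$ (using $\sigma_a(\mid)=\mid$), for all $B\in\MatNC$. Subtracting the two expressions yields $(\sigma_a(B)-B)\Gamma_a=0$, i.e. $X_a(B)\Gamma_a=0$ for every $B$, since $X_a(B)=B-\sigma_a(B)$. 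This cancellation is the heart of the argument and is exactly where regularity enters: by hypothesis there is some $B_0$ with $\det(X_a(B_0))\neq 0$, so $X_a(B_0)$ is an invertible matrix and $X_a(B_0)\Gamma_a=0$ forces $\Gamma_a=0$. Feeding this back into either product rule gives $\nabla_{X_a}B=X_a(B)$, which is \eqref{eq:matrix.unique.connection}. I expect no further difficulty; the only subtlety is recognising that the doubled index set collapses the two Leibniz rules, via $X_a(B)=B-\sigma_a(B)$, into the single condition $X_a(B)\Gamma_a=0$, after which regularity immediately closes the proof.
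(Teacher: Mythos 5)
Your proposal is correct and follows essentially the same route as the paper: both arguments evaluate the two Leibniz rules \eqref{eq:nabla.left.BA} and \eqref{eq:nabla.right.BA} at $A=\mid$, subtract to obtain $X_a(B)\Gammat_a=0$ for all $B$, and invoke regularity to force $\Gammat_a=0$. Your explicit separation of the existence check (verifying that $\nabla_{\Xt_k}=\Xt_k$ satisfies both product rules via the $(\sigma_a,\idA)$- and $(\idA,\sigma_a)$-derivation properties) is a small presentational improvement over the paper, which leaves that step implicit, but the substance is identical.
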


\begin{proof}
  In principle, a $\st$-connection is given by specifying
  $\nabla_{\Xt_k}\mid=\Gammat_k$ for $k=1,\ldots,2n$. However, since
  $\Xt_{a}=\Xt_{a+n}$ for $a=1,\ldots,n$, it is enough to specify
  $\nabla_{X_a}\mid=\Gammat_a$ for $a=1,\ldots,n$.  As previously
  noted, the requirement that
  \begin{align*}
    \nabla_{\Xt_k}(BA) = \sigmat_k(B)\nabla_{\Xt_k}A + \Xt_k(B)\tauh_k(A)
  \end{align*}
  for $k=a$ and $k=a+n$ implies that
  \begin{align*}
    &\nabla_{X_a}(BA) = \sigma_a(B)\nabla_{X_a}A + X_a(B)A\\
    &\nabla_{X_a}(BA) = B\nabla_{X_a}A + X_a(B)\sigma_a(A)
  \end{align*}
  for $A,B\in\MatNC$ and $a=1,\ldots,n$. Writing
  $\nabla_{X_a}\mid = \Gammat_a$ and equating the above expressions
  for $A=\mid$, gives
  \begin{align*}
    0 &= B\nabla_{X_a}\mid + X_a(B)-\sigma_a(B)\nabla_{X_a}\mid -X_a(B)\\        
    &= (B-\sigma_a(B))\Gammat_a = X_a(B)\Gammat_a,
  \end{align*}
  for all $B\in\MatNC$ and $a=1,\ldots,n$.  Since $\Sigma^\ast$ is
  assumed to be regular, there exist $B_a\in\MatNC$ for
  $a=1,\ldots,n$, such that $\det(X_a(B_a))\neq 0$, implying that
  $\Gammat_a=0$ for $a=1,\ldots,n$, giving $\Gammat_k=0$ for
  $k=1,\ldots,2n$ and the $\st$-connection
  \begin{align*}
    \nabla_{\Xt_k}(A) = \Xt_k(A)
  \end{align*}
  for $k=1,\ldots,2n$.
\end{proof}

\noindent
The unique connection in
Proposition~\ref{prop:matrix.st.star.unique.connection} is torsion
free with respect to the anchor map $\varphi(X_a)=E_a$ with
$[E_a,E_b] = [E_a,U_b]=0$, since
\begin{align*}
  \nabla_{X_a}\varphi(X_b) = E_b-U_aE_bU_a^\dagger = E_b-E_bU_aU_a^\dagger = 0
\end{align*}
for $a,b=1,\ldots,n$, and consequently
\begin{align*}
  \nabla_{\Xt_k}\varphi(\Xt_l)-\nabla_{\Xt_l}\varphi(\Xt_k) = 0
\end{align*}
for $k,l=1,\ldots,2n$. Moreover, the connection $p\circ\nabla$ is a
torsion free $\st$-connection on
$(\MatNC p,\{p\circ\sigmah_k,p\circ\tauh_k\}_{k=1}^{2n})$ with respect
to $\varphi(X_a)=E_ap$ if $p=v_0v_0^\dagger$, and $v_0$ is chosen to
be a common eigenvector of $\{E_a,U_a\}_{a=1}^n$; namely
\begin{align*}
  \nabla_{X_a}\varphi(X_b)
  &= E_bp - U_aE_bpU_a^\dagger p= E_b(p-U_apU_a^\dagger p) \\
  &= E_b(p-\mu_a p\mu_a^{-1} p) = E_b(p-p^2) = 0
\end{align*}
for $a,b=1,\ldots,n$.

\subsection{Levi-Civita $\st$-connections}

\noindent
A hermitian form on the (left) module $\MatNC$ is given as
\begin{align}\label{eq:metric.matrix.free}
  h(A,B) = Ah_0B^\dagger
\end{align}
for $h_0\in\MatNC$ such that $h_0^\dagger=h_0$. Demanding that $h$ is
$\st$-invariant on the $\Sigma^\ast$-module
$(\MatNC,\{\sigmah_k,\tauh_k\}_{k=1}^{2n})$ amounts to
\begin{align*}
  \sigma_a(h_0) = h_0\equivalent [U_a,h_0]=0,
\end{align*}
implying that $X_a(h_0)=h_0-\sigma_a(h_0)=0$ for $a=1,\ldots,n$. For
instance, for $1\leq k\leq n$ one checks that
\begin{align*}
  &\sigmat_k\paraa{h(A,B)} = h\paraa{\sigmah_k(A),\tauh_{\iota(k)}(B)}\equivalent
  \sigma_k(Ah_0B^\dagger) = \sigma_k(A)h_0\sigma_k(B)\equivalent\\
  &\sigma_k(A)(h_0-\sigma_a(h_0))\sigma_k(B) = 0.
\end{align*}
The unique connection in
Proposition~\ref{prop:matrix.st.star.unique.connection} is compatible
with the hermitian form $h$; namely, for $1\leq k\leq n$ one finds that
\begin{align*}
  h\paraa{\sigmah_k(A)&,\nabla_{\Xt_{\iota(k)}}B}
    +h\paraa{\nabla_{\Xt_k}A,\sigmah_{\iota(k)}(B)}
  = \sigma_k(A)h_0X_k(B)^\dagger + X_k(A)h_0 B^\dagger\\
                      &= \sigma_k(A)h_0X_k(B^\dagger) + X_k(A)h_0 B^\dagger
                        =\sigma_k(A)\sigma(h_0)X_k(B^\dagger) + X_k(A)h_0 B^\dagger\\
                        &= \sigma_k(A)X_k(h_0B^\dagger) + X_k(A)h_0 B^\dagger
                      = X_k\paraa{h(A,B)}
\end{align*}
using that $X_k^\ast=X_k$, $X_k(h_0)=0$ and $\sigma_k(h_0)=h_0$. One
can do a similar computation for $n<k\leq 2n$, showing that the
$\st$-connection is indeed compatible with the hermitian form
$h$. Hence, $\nabla$ is a Levi-Civita $\st$-connection on
$(\MatNC,\{\sigmah_k,\tauh_k\}_{k=1}^{2n})$.

Next, let us construct a Levi-Civita $\st$-connection on the
projective module
\begin{equation*}
 (\MatNC p,\{p\circ\sigmah_k,p\circ\tauh_k\}_{k=1}^{2n}). 
\end{equation*}
As already noted, the connection
\begin{align*}
  p\paraa{\nabla_{\Xt_k}A} = \Xt_k(A)p
\end{align*}
is a torsion free connection with respect to $\varphi(X_a)=E_ap$.
Proposition~\ref{prop:metric.connection.projective.module} implies
that $p\circ\nabla$ is a metric connection on
$(\MatNC p,\{p\circ\sigmah_k,p\circ\tauh_k\}_{k=1}^{2n})$ if $p$ is an
orthogonal projection with respect to $h$, and
$[\sigmah_k,p]=[\tauh_k,p]=0$. If $v_0$ is an eigenvector of $U_a$,
with eigenvalue $\mu_a$, then
\begin{align*}
  [U_a,p] &= Uv_0v_0^\dagger-v_0v_0^\dagger U_a
            = \mu_ap - v_0(U_a^\dagger v_0)
            = \mu_ap - v_0(\bar{\mu}_av_0)^\dagger\\
          &= \mu_ap - \mu_ap = 0,
\end{align*}
which implies that $[\sigmah_k,p]=[\tauh_k,p]=0$. In order for $p$ to
be an orthogonal projection with respect to $h$, one needs to choose
$h_0$ such that $[h_0,p]=0$ since 
\begin{align*}
  h(Ap,B)-h(A,Bp)
  &= Aph_0B^\dagger-Ah_0(Bp^\dagger)
    = Aph_0B^\dagger - Ah_0pB^\dagger\\
  &=A(ph_0-h_0p)B^\dagger.
\end{align*}
For instance, one may choose $h=\hat{h}_0p$ for arbitrary $\hat{h}_0\in\reals$.
Written in terms of vectors in $\complex^N$, one finds that
\begin{align*}
  \nabla_{X_a}v = (\mid-\bar{\mu}_aU_a)v
\end{align*}
is a Levi-Civita $\st$-connection with respect to
$\varphi(X_a)=\lambda_av_0$ and
\begin{align*}
  h(u,v)=\hat{h}_0uv^\dagger
\end{align*}
for arbitrary $\hat{h}_0\in\complex$ and $\lambda_a\in\complex$, where
$v_0$ is a common eigenvector of $\{U_a\}_{a=1}^n$ with $U_av_0=\mu_av_0$.

\section*{Acknowledgements}

\noindent
J.A. is supported by the Swedish Research Council grant no. 2017-03710.

\bibliographystyle{alpha}
\bibliography{references}

\end{document}